\newcommand{\R}{\mathbb{R}}
\newcommand{\N}{\mathbb{N}}
\newcommand{\cG}{\mathcal{G}}
\newcommand{\Pmic}{\mathrm{P}_{\mathrm{mic}}}
\newcommand{\Pcan}{\mathrm{P}_{\mathrm{can}}}
\newcommand{\Emic}{\mathrm{E}_{\mathrm{mic}}}
\newcommand{\Ecan}{\mathrm{E}_{\mathrm{can}}}
\newcommand{\eee}{\mathrm{e}}
\newcommand{\Wt}{\tilde{\mathcal{W}}}
\newcommand{\des}{\delta_\square}
\newcommand{\h}{\tilde{h}}
\newcommand{\W}{\mathcal{W}}
\newcommand{\limn}{\lim_{n\to\infty}}
\newcommand{\that}{\hat{\theta}}
\newcommand{\dd}{\text{d}}
\newcommand{\dx}{\,\mathrm{d}x}
\newcommand{\dy}{\,\mathrm{d}y}
\DeclareMathOperator{\op}{op}
 \newcounter{enunciato}[section]
 \newtheorem{ittheorem}{Theorem}
 \newtheorem{itlemma}{Lemma}
 \newtheorem{itproposition}{Proposition}
 \newtheorem{itdefinition}{Definition}
 \theoremstyle{definition}
 \newtheorem{itremark}{Remark}
 \newtheorem{itclaim}{Claim}
 \newtheorem{itfact}{Fact}
 \newtheorem{itexample}{Example}
 \newtheorem{itconjecture}{Conjecture}
 \newtheorem{itobservation}{Observation} 
 \newtheorem{itcorollary}{Corollary} 
 \newtheorem{itquestion}{Question} 
 \newtheorem{itworkinghypothesis}{Working hypothesis}
 \newenvironment{theorem}{\addtocounter{enunciato}{1}
 \begin{ittheorem}}{\end{ittheorem}}
 \newenvironment{lemma}{\addtocounter{enunciato}{1}
 \begin{itlemma}}{\end{itlemma}}
 \newenvironment{proposition}{\addtocounter{enunciato}{1}
 \begin{itproposition}}{\end{itproposition}}
 \newenvironment{definition}{\addtocounter{enunciato}{1}
 \begin{itdefinition}}{\end{itdefinition}}
 \newenvironment{remark}{\addtocounter{enunciato}{1}
 \begin{itremark}}{\end{itremark}}
 \newenvironment{claim}{\addtocounter{enunciato}{1}
 \begin{itclaim}}{\end{itclaim}}
 \newenvironment{fact}{\addtocounter{enunciato}{1}
 \begin{itfact}}{\end{itfact}}
 \newenvironment{conjecture}{\addtocounter{enunciato}{1}
 \begin{itconjecture}}{\end{itconjecture}}
 \newenvironment{corollary}{\addtocounter{enunciato}{1}
 \begin{itcorollary}}{\end{itcorollary}}
 \newcommand{\be}[1]{\begin{equation}\label{#1}}
 \newcommand{\ee}{\end{equation}}
 \newcommand{\bl}[1]{\begin{lemma}\label{#1}}
 \newcommand{\el}{\end{lemma}}
 \newcommand{\bt}[1]{\begin{theorem}\label{#1}}
 \newcommand{\et}{\end{theorem}}
 \newcommand{\bd}[1]{\begin{definition}\label{#1}}
 \newcommand{\ed}{\end{definition}}
 \newcommand{\bcl}[1]{\begin{claim}\label{#1}}
 \newcommand{\ecl}{\end{claim}}
 \newcommand{\bfact}[1]{\begin{fact}\label{#1}}
 \newcommand{\efact}{\end{fact}}
 \newcommand{\bp}[1]{\begin{proposition}\label{#1}}
 \newcommand{\ep}{\end{proposition}}
 \newcommand{\bc}[1]{\begin{corollary}\label{#1}}
 \newcommand{\ec}{\end{corollary}}
 \newcommand{\bcj}[1]{\begin{conjecture}\label{#1}}
 \newcommand{\ecj}{\end{conjecture}}
 \newcommand{\bpr}{\begin{proof}}
 \newcommand{\epr}{\end{proof}}
 \newcommand{\bprl}[1]{\begin{proofof}{\it\ref{#1}}.\,\,}
 \newcommand{\eprl}{\end{proofof}}
 \newcommand{\bi}{\begin{itemize}}
 \newcommand{\ei}{\end{itemize}}
 \newcommand{\ben}{\begin{enumerate}}
 \newcommand{\een}{\end{enumerate}}
\begin{document}

\title{Breaking of ensemble equivalence for\\ 
dense random graphs under a single constraint}

\date{\today}

\author{
\renewcommand{\thefootnote}{\arabic{footnote}}
F.\ den Hollander
\footnotemark[1]
\\
\renewcommand{\thefootnote}{\arabic{footnote}}
M.\ Markering
\footnotemark[2]
}

\footnotetext[1]{
Mathematical Institute, Leiden University, P.O.\ Box 9512,
2300 RA Leiden, The Netherlands,\\
{\sl denholla@math.leidenuniv.nl}
}

\footnotetext[2]{
Mathematical Institute, Leiden University, P.O.\ Box 9512,
2300 RA Leiden, The Netherlands,\\
{\sl maartenmarkering@outlook.com}
}

\maketitle

\begin{abstract}
Two ensembles are frequently used to model random graphs subject to constraints: the \emph{microcanonical ensemble} (= hard constraint) and the \emph{canonical ensemble} (= soft constraint). It is said that \emph{breaking of ensemble equivalence} (BEE) occurs when the specific relative entropy of the two ensembles does not vanish as the size of the graph tends to infinity. The latter means that it matters for the scaling properties of the graph whether the constraint is met for every single realisation of the graph or only holds as an ensemble average. Various examples were analysed in the literature, and the specific relative entropy was computed as a function of the constraint. It was found that BEE is the rule rather than the exception for two classes: \emph{sparse} random graphs when the \emph{number} of constraints is of the order of the number of vertices and \emph{dense} random graphs when there are \emph{two or more} constraints that are \emph{frustrated}. 

In the present paper we establish BEE for a third class: dense random graphs with a \emph{single} constraint, namely, on the density of a given finite simple graph. In doing so we solve the open problem as to whether BEE is possible under a single constraint. We show that BEE occurs only in a certain range of choices for the density and the number of edges of the simple graph, which we refer to as the BEE-phase. We show that, in part of the BEE-phase, there is a gap between the scaling limits of the averages of the maximal eigenvalue of the adjacency matrix of the random graph under the two ensembles, a property that is referred to as \emph{spectral signature} of BEE. Proofs are based on an analysis of the variational formula on the space of graphons for the limiting specific relative entropy derived in \cite{dHMRS18}, in combination with an identification of the minimising graphons and replica symmetry arguments. We show that in the replica symmetric region of the BEE-phase, as the size of the graph tends to infinity, the microcanonical ensemble behaves like an Erd\H{o}s-R\'enyi random graph, while the canonical ensemble behaves like a \emph{mixture} of two Erd\H{o}s-R\'enyi random graphs. In other words, BEE is due to \emph{coexistence} of two densities.

\vspace{0.5cm}

\noindent
\emph{MSC2020:} 
05C80, 
60C05, 
60F10, 
82B20. 
\\
\emph{Keywords:} Constrained random graphs, Gibbs ensembles, Relative entropy, Breaking of ensemble equivalence, Graphons, Variational representations, Maximal eigenvalues, Replica symmetry.\\ 
\emph{Acknowledgement:} The research in this paper was supported through NWO Gravitation Grant NETWORKS 024.002.003. 
\end{abstract}

\newpage


\section{Introduction and main results}
\label{s.intro}

Section~\ref{ss.back} provides the background and the motivation behind our paper. Section~\ref{ss.prelim} states the definition of the microcanonical and the canonical ensemble in the context of constrained random graphs, recalls the notion of ensemble equivalence, lists the key definitions of graphons and subgraph counts, and gives the variational characterisation of the specific relative entropy of the two ensembles for dense random graphs derived in \cite{dHMRS18}, which is the main tool in our paper. Section~\ref{ss.main} states our main theorems. Section~\ref{ss.typgraph} identifies the typical graphs under the two ensembles.  Section~\ref{ss.disc} offers a brief discussion and an outline of the remainder of the paper.    


\subsection{Background and motivation}
\label{ss.back}

In this paper we analyse random graphs that are subject to \emph{constraints}. Statistical physics prescribes which probability distribution on the set of graphs we should choose when we want to model a given type of constraint \cite{G02}. Two important choices are: (1) the \emph{microcanonical ensemble}, where the constraints are satisfied by each individual graph; (2) the \emph{canonical ensemble}, where the constraints are satisfied as ensemble averages. For random graphs that are large but finite, the two ensembles represent different empirical situations. One of the cornerstones of statistical physics is that the two ensembles become equivalent in the thermodynamic limit, which in our setting corresponds to letting the size of the graph tend to infinity. However, this property does \emph{not} hold in general. We refer to \cite{T15} for more background on the phenomenon of \emph{breaking of ensemble equivalence} (BEE).

BEE has been investigated for various choices of constraints, including the degree sequence and the total number of subgraphs of a specific type. The key distinctive object is the \emph{relative entropy} $S_n$ of the microcanonical ensemble with respect to the canonical ensemble when the graph has $n$ vertices. In the \emph{sparse regime},where the number of edges per vertex remains bounded, the relevant quantity is $s_\infty = \lim_{n\to\infty} n^{-1} S_n$, because $n$ is the scale of the number of \emph{vertices}. In the \emph{dense regime}, where the number of edges per vertex is of the order of the number of vertices, the relevant quantity is $s_\infty = \lim_{n\to\infty} n^{-2} S_n$, because $n^2$ is the scale of the number of \emph{edges}. 
\begin{itemize}
\item
\underline{Sparse regime:}
In \cite{SdMdHG15, GHR17,GHR18} it was shown that constraining the degrees of \emph{all} the vertices leads to BEE, even when the graph consists of multiple communities. An explicit formula was derived for $s_\infty$ in terms of the limit of the empirical degree distribution of the constraint. In \cite{GS18} a formula was put forward that expresses the specific relative entropy in terms of a covariance matrix under the canonical ensemble. 
\item
\underline{Dense regime:}
In \cite{dHMRS18} it was shown that constraining the densities of a finite number of subgraphs may lead to BEE. The analysis relied on the \emph{large deviation principle for graphons} associated with the Erd\H{o}s-R\'enyi (ER) random graph \cite{C15,CV11}. The main result was a variational formula for $s_\infty$ in the space of graphons. In \cite{dHMRS19}, for the special case where the constraint is on the densities of the edges and triangles, it was shown that $s_\infty>0$ when the constraints are \emph{frustrated}, i.e., do not lie on the ER-line where the density of triangles is the third power of the density of edges. Moreover, the asymptotics of $s_\infty$ near the ER-line was identified, and turns out to depend on whether the ER-line is approached from above or below. 
\end{itemize} 

It is an open problem whether a \emph{single} constraint may lead to BEE \cite{dHMRS18}. It was believed that this cannot be the case, because for a single constraint there is no frustration. The goal of the present paper is to show that this intuition is wrong: we condition on the density of a given finite simple graph and prove that BEE occurs in a certain \emph{range} of choices for the density and the number of edges of the simple graph, which we refer to as the BEE-phase. We analyse how $s_\infty$ tends to zero near the curve that borders the BEE-phase. This phase transition is \emph{unlike} any of the phenomena surrounding BEE observed before. In our case, BEE is due to the \emph{coexistence} of two densities in the BEE-phase, similar to the phase transition between water and ice. Thus, our paper provides new insight into the mechanisms causing BEE.

In \cite{DGHM20} the gap $\Delta_n$ between the \emph{averages} of the maximal eigenvalue of the adjacency matrix of a constrained random graph under the two ensembles was considered. A \emph{working hypothesis} was put forward, stating that BEE is equivalent to this gap not vanishing in the limit as $n\to\infty$. For a random regular graph with a fixed degree, this equivalence was proved for a range of degrees that interpolates between the sparse and the dense regime. In the present paper we prove the same for the single constraint. In particular, we compute $\delta_\infty = \lim_{n\to\infty} n^{-1} \Delta_n$, show that $\delta_\infty \neq 0$ if and only if the density and the number of edges of the simple graph fall in the BEE-phase, and analyse how $\delta_\infty$ tends to zero near the curve that borders the BEE-phase. 

We will see that the notions of \emph{replica symmetry} and \emph{replica symmetry breaking} highlighted in \cite{LZ16} play an important role. In the regime of replica symmetry we have a complete identification of $s_\infty$ and $\delta_\infty$, in the regime of replica symmetry breaking some pieces of the characterisation are missing. Furthermore, we establish a direct connection between the region of replica symmetry for regular graphs and the region of ensemble equivalence.
  

\subsection{Definitions and preliminaries}
\label{ss.prelim}

In this section, which is partly lifted from \cite{dHMRS18}, we present the definitions of the main concepts to be used in the sequel, together with some key results from prior work. We consider scalar-valued constraints, even though \cite{dHMRS18} deals with more general vector-valued constraints. 

Section \ref{sss.ensemble} presents the formal definition of the two ensembles and the definition of ensemble equivalence in the dense regime. Section \ref{sss.graphons} recalls some basic facts about graphons. Section \ref{sss.varchar} recalls the variational characterisation of ensemble equivalence proven in \cite{dHMRS18}. Section \ref{sss.maxeig} looks at the average of the maximal eigenvalue value of the adjacency matrix in the two ensembles and recalls a working hypothesis put forward in \cite{DGHM20} that links ensemble equivalence to a vanishing gap between the two averages. 
 

\subsubsection{Microcanonical ensemble, canonical ensemble, relative entropy}
\label{sss.ensemble}

For $n \in \N$, let $\cG_n$ denote the set of all $2^{\binom{n}{2}}$ simple undirected graphs with $n$ vertices.\ Let $T$ denote a scalar-valued function on $\cG_n$, and $T^*$ a specific scalar that is \emph{graphical}, i.e., realisable by at least one graph in $\cG_n$. Given $T^*$, the \emph{microcanonical ensemble} is the probability distribution $\Pmic$ on $\cG_n$ with \emph{hard constraint} $T^*$ defined as
\begin{equation}
\Pmic(G) :=
\left\{
\begin{array}{ll} 
| \{G \in \cG_n\colon\, T(G) = T^* \}|^{-1}, \quad & \text{if } T(G) = T^*, \\ 
0, & \text{otherwise},
\end{array}
\right. \qquad G\in \cG_n.
\label{eq:PM}
\end{equation}
The \emph{canonical ensemble} $\Pcan$ is the unique probability distribution on $\cG_n$ that maximises the \emph{entropy} 
\begin{equation}
S_n({\rm P}) := - \sum_{G \in \cG_n}{\rm P}(G) \log {\rm P}(G)
\end{equation}
subject to the \emph{soft constraint} $\langle T \rangle  := \sum_{G \in \cG_n} T(G)\,{\rm P}(G) = T^*$. This gives the formula \cite{J57}
\begin{equation}
\Pcan(G) := \frac{1}{Z(\theta^*)}\,\eee^{\theta^*T(G)}, \qquad G \in \cG_n,
\label{eq:PC}
\end{equation}
with $Z(\theta^*)$ the \emph{partition function} In \eqref{eq:PC}, the Lagrange multiplier $\theta^*$ must be set to the unique value that realises $\langle T \rangle  = T^*$, see \cite[Equation (2.6)-(2.7)]{dHMRS18}.

The \emph{relative entropy} of $\Pmic$ with respect to  $\Pcan$ is defined as
\begin{equation}
S_n(\Pmic \mid \Pcan) 
:= \sum_{G \in \cG_n} \Pmic(G) \log \frac{\Pmic(G)}{\Pcan(G)}.
\label{eq:KL1}
\end{equation}
For any $G_1,G_2\in\cG_n$, $\Pcan(G_1)=\Pcan(G_2)$ whenever $T(G_1)=T(G_2)$, i.e., the canonical probability is the same for all graphs with the same value of the constraint. We may therefore rewrite \eqref{eq:KL1} as
\begin{equation}
S_n(\Pmic \mid \Pcan) = \log \frac{\Pmic(G^*)}{\Pcan(G^*)},
\label{eq:KL2}
\end{equation}
where $G^*$ is \emph{any} graph in $\cG_n$ such that $T(G^*) =T^*$.

\begin{remark}
\label{rmk.convergence}
Both the constraint $T^*$ and the Lagrange multiplier $\theta^*$ in general depend on $n$, i.e., $T^*=T^*_n$ and $\theta^* = \theta^*_n$. We consider constraints that converge when we pass to the limit $n\to\infty$, i.e., 
\begin{equation}
\label{eq: Assumption T}
\lim_{n\to\infty} T^*_n =: T^*_\infty.
\end{equation}
Consequently, we expect that  
\begin{equation}
\label{eq:Assumption}
\lim_{n\to\infty} \theta^*_n =: \theta^*_\infty.
\end{equation}
Throughout the paper we \emph{assume} that \eqref{eq:Assumption} holds. If convergence fails, then we may still consider subsequential convergence. The subtleties concerning \eqref{eq:Assumption} were discussed in detail in \cite[Appendix A]{dHMRS18}.
\end{remark}

All the quantities above depend on $n$. In order not to burden the notation, we exhibit this $n$-dependence only in the symbols $\cG_n$ and $S_n(\Pmic \mid \Pcan)$. When we pass to the limit $n\to\infty$, we need to specify how $T(G)$, $T^*$ and $\theta^*$ are chosen to depend on $n$. We refer the reader to \cite{dHMRS18}, where this issue was discussed in detail. 

\begin{definition}{\bf [Ensemble equivalence]}
\label{def: sinfty}
{\rm In the dense regime, if 
\begin{equation}
\label{eq: def relative entropy limit}
s_{\infty} : = \lim_{n\to\infty} n^{-2} S_n(\Pmic \mid \Pcan)=0,
\end{equation}
then $\Pmic$ and $\Pcan$ are said to be {\em equivalent}.}
\end{definition}
This particular notion of ensemble equivalence is known as \emph{measure equivalence} of ensembles and is standard in the study of ensemble equivalence of networks. Other notions of ensemble equivalence are \emph{thermodynamic equivalence} and \emph{macrostate equivalence}. Under certain hypotheses, the three notions have been shown to be equivalent for physical systems \cite{T15}. We refer the reader to \cite{T15} and \cite{TET04} for further discussion of different notions of ensemble equivalence.


\subsubsection{Graphons}
\label{sss.graphons}

There is a natural way to embed a simple graph on $n$ vertices in a space of functions called \emph{graphons}. Let $\W$ be the space of functions $h\colon\,[0,1]^2 \to [0,1]$ such that $h(x,y) = h(y,x)$ for all $(x,y) \in [0,1]^2$. A finite simple graph $G$ on $n$ vertices can be represented as a graphon $h^{G} \in \W$ in a natural way as
\begin{equation}
\label{graphondef}
h^{G}(x,y) := \left\{ \begin{array}{ll} 
1 &\mbox{if there is an edge between vertex } \lceil{nx}\rceil \mbox{ and vertex } \lceil{ny}\rceil,\\
0 &\mbox{otherwise}.
\end{array} 
\right.
\end{equation}
The space of graphons $\W$ is endowed with the \emph{cut distance}
\begin{equation}
d_{\square} (h_1,h_2) := \sup_{S,T\subset [0,1]} 
\left|\int_{S\times T} \dd x\,\dd y\,[h_1(x,y) - h_2(x,y)]\right|,
\qquad h_1,h_2 \in \W.
\end{equation}
On $\W$ there is a natural equivalence relation $\sim$. Let $\Sigma$ be the space of measure-preserving bijections $\sigma\colon\, [0,1] \to [0,1]$. Then $h_1(x,y)\sim h_2(x,y)$ if $\delta_{\square}(h_1,h_2)=0$, where $\delta_{\square}$ is the \emph{cut metric} defined by 
\begin{equation}
\label{deltam}
\delta_{\square}(\tilde{h}_1,\tilde{h}_2) 
:= \inf _{\sigma_1,\sigma_2 \in \Sigma} d_{\square}(h_1^{\sigma_1}, h_2^{\sigma_2}),
\qquad \tilde{h}_1,\tilde{h}_2 \in \Wt,
\end{equation}
with $h^\sigma(x,y)=h(\sigma x,\sigma y)$. This equivalence relation yields the quotient space $(\Wt,\delta_{\square})$.
As noted above, we suppress the $n$-dependence. Thus, by $G$ we denote any simple graph on $n$ vertices, by $h^G$ its image in the graphon space $\W$, and by $\tilde{h}^G$ its image in the quotient space $\Wt$. For a more detailed description of the structure of the space $(\Wt,\delta_{\square})$ we refer  to \cite{BCLSV08,BCLSV12,DGKR15}. 

For $h \in \Wt$ and $F$ a finite simple graph with $m$ vertices and edge set $E(F)$, define 
\begin{equation}
\label{eq:homdensity}
t(F,h) := \int_{[0,1]^m} \prod_{\{i,j\} \in E(F)} h(x_i,x_j) \dx_1\ldots\dx_m. 
\end{equation}
Then the \emph{homomorphism density} of $F$ in $G$ equals $t(F,h^G)$, where $h^G$ is the empirical graphon defined in \eqref{graphondef}. 
 
In this paper we focus on the special case where the constraint $T(G)=T(h^G):=t(F,h^G)$ is on the \emph{homomorphism density} $T^*_n$ of a \emph{specific subgraph} $F$. The map $T$ is well-defined on both the space of graphs $\cG_n$ for each $n$ as well as the space of graphons. Rewriting \eqref{eq:PC}, we obtain
\begin{equation}
\label{eq:CPD}
\Pcan(G) = \eee^{n^2\big[\theta_n^* T(G)-\psi_n(\theta_n^*)\big]},
\qquad G \in \cG_n,
\end{equation}
where 
\begin{equation}
\label{eq:PF}
\psi_n(\theta_n^*) := \frac{1}{n^2}\log\sum_{G\in\cG_n} 
\eee^{n^2 [\theta_n^* T(G)]} = \frac{1}{n^2}\log Z(\theta_n^*).
\end{equation}
It turns out that, under the scaling $n^2$, the function $\psi_n$ converges. Hence, rewriting \eqref{eq:PC} in this form aids us in the analysis of the canonical ensemble.
 

\subsubsection{Variational characterisation of ensemble equivalence}
\label{sss.varchar}

In order to characterise the asymptotic behaviour of the two ensembles, the entropy function of a Bernoulli random variable is essential. For $u\in [0,1]$, let 
\begin{equation}
\label{Idef1}
I(u) := \tfrac{1}{2}u\log u +\tfrac{1}{2}(1-u)\log(1-u).
\end{equation}
Extend the domain of this function to the graphon space $\W$ by defining  
\begin{equation}
\label{Idef}
I(h) := \int_{[0,1]^2} \dd x\, \dd y\,\,I(h(x,y))
\end{equation}
(with the convention that $0\log0:=0$). On the quotient space $(\Wt,\delta_{\square})$, define $I(\tilde{h}) = I(h)$, where $h$ is any element of the equivalence class $\tilde{h}$. Note that $I(h)$ takes values in $[-\tfrac12\log 2,0]$. Apart from a shift by $\tfrac12\log 2$, $h \mapsto I(h)$ plays the role of the rate function in the large deviation principle for the empirical graphon associated with the Erd\H{o}s-R\'enyi random graph, derived in \cite{CV11}.

The key result in \cite{dHMRS18} is the following variational formula for $s_\infty$, where
\begin{equation}
\Wt^* := \{\tilde{h}\in \Wt\colon\,T(h) = T^*_\infty\}
\end{equation}
is the subspace of all graphons that meet the constraint $T^*_\infty$. This is a compact set, since $T$ is continuous in the cut metric and $(\Wt,\delta_{\square})$ is compact \cite{LS09}.

\begin{theorem}{\bf [Variational characterisation of ensemble equivalence]}
\label{th:Limit}
Subject to \eqref{eq: Assumption T} and \eqref{eq:Assumption},
\begin{equation}
\label{sinftydef}
\lim_{n\to\infty} n^{-2} S_n(\Pmic \mid \Pcan) =: s_\infty
\end{equation}
with
\begin{equation}
\label{varreprsinfty}
s_\infty = \sup_{\tilde{h}\in \Wt} \left[\theta^*_\infty T(\tilde{h})-I(\tilde{h})\right]
-\sup_{\tilde{h}\in \Wt^*} \left[\theta^*_\infty T(\tilde{h}) - I(\tilde{h})\right].
\end{equation}
\end{theorem}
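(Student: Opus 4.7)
The starting point is the exact identity \eqref{eq:KL2}, which gives $S_n(\Pmic\mid\Pcan) = \log\Pmic(G^*) - \log\Pcan(G^*)$ for any reference graph $G^*\in\cG_n$ with $T(G^*)=T^*_n$. Dividing by $n^2$ and passing to the limit, the task reduces to identifying the scaling limits of the two terms on the right-hand side. Each will turn out to be a Varadhan-type supremum associated with the Chatterjee-Varadhan large deviation principle (LDP) for the empirical graphon of the Erd\H{o}s-R\'enyi random graph with parameter $1/2$, whose rate function on the compact space $(\Wt,\delta_\square)$ is $\tilde h\mapsto I(\tilde h)+\tfrac12\log 2$.

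For the canonical side, \eqref{eq:CPD} gives $n^{-2}\log\Pcan(G^*) = \theta^*_n T^*_n - \psi_n(\theta^*_n)$, and rewriting the partition function in \eqref{eq:PF} as an expectation under the uniform (i.e.\ ER$(1/2)$) law on $\cG_n$ yields
$$\psi_n(\theta^*_n) = \frac{1}{n^2}\log 2^{\binom{n}{2}} + \frac{1}{n^2}\log\EE_{1/2}\bigl[\eee^{n^2\theta^*_n T(G)}\bigr].$$
Since $T(\tilde h)=t(F,\tilde h)$ is bounded and continuous in the cut metric, Varadhan's lemma combined with the Chatterjee-Varadhan LDP gives
$$\lim_{n\to\infty}\psi_n(\theta^*_n) = \sup_{\tilde h\in\Wt}\bigl[\theta^*_\infty T(\tilde h) - I(\tilde h)\bigr],$$
where \eqref{eq: Assumption T} and \eqref{eq:Assumption} are used to replace $\theta^*_n$ by $\theta^*_\infty$.

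For the microcanonical side, $\Pmic(G^*) = |\{G\in\cG_n : T(G)=T^*_n\}|^{-1}$, so after factoring out $2^{\binom{n}{2}}$ one finds
$$-\frac{1}{n^2}\log\Pmic(G^*) = \tfrac12\log 2 + \frac{1}{n^2}\log\PP_{1/2}\bigl(T(G)=T^*_n\bigr) + o(1).$$
The LDP applied near the closed level set $\Wt^*$ shows that the last term tends to $-\inf_{\tilde h\in\Wt^*}[I(\tilde h)+\tfrac12\log 2]$, so $n^{-2}\log\Pmic(G^*)\to \inf_{\tilde h\in\Wt^*} I(\tilde h)$. Because $T(\tilde h)\equiv T^*_\infty$ on $\Wt^*$, this equals $\theta^*_\infty T^*_\infty - \sup_{\tilde h\in\Wt^*}[\theta^*_\infty T(\tilde h) - I(\tilde h)]$. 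Subtracting this from the canonical limit cancels the contribution $\theta^*_\infty T^*_\infty$ and produces precisely \eqref{varreprsinfty}.

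The main technical obstacle is that $\{T(G)=T^*_n\}$ is a codimension-one level set with empty interior in the graphon topology, so the LDP lower bound is not immediate for this exact event. The remedy is to thicken to a shell $\{|T(\cdot)-T^*_\infty|<\varepsilon\}$, apply the standard upper and lower LDP bounds, and then let $\varepsilon\downarrow 0$, exploiting continuity of $T$ in $\delta_\square$ and lower semi-continuity of $I$ on the compact quotient space, together with the fact that $T^*_n$ is graphical and $T^*_n\to T^*_\infty$. The delicate convergence of the Lagrange multiplier in \eqref{eq:Assumption}, which also enters the canonical step, is handled as in \cite[Appendix A]{dHMRS18}.
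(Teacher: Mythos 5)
Theorem~\ref{th:Limit} is not proved in the present paper: it is quoted from \cite{dHMRS18}, where the full argument is carried out. Your reconstruction nevertheless follows the same overall strategy as \cite{dHMRS18} --- starting from \eqref{eq:KL2}, applying the Chatterjee--Varadhan LDP and Varadhan's lemma to identify $\lim_n \psi_n(\theta^*_n)$, and handling the drift $\theta^*_n\to\theta^*_\infty$ as in their Appendix~A --- and the bookkeeping on the canonical side, including the cancellation of $\theta^*_\infty T^*_\infty$ at the end, is correct.

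The gap is in the microcanonical lower bound. You correctly flag that $\{T(G)=T^*_n\}$ has empty interior, but the proposed remedy --- pass to the shell $\{|T(\cdot)-T^*_\infty|<\varepsilon\}$, apply the LDP upper and lower bounds, let $\varepsilon\downarrow 0$ --- only delivers one of the two inequalities. The LDP upper bound on the closed shell does give, via $\{T(G)=T^*_n\}\subset\{|T(G)-T^*_\infty|\le\varepsilon\}$, compactness and lower semicontinuity, the desired upper estimate for $\tfrac1{n^2}\log\PP_{1/2}(T(G)=T^*_n)$. But the LDP lower bound on the open shell bounds $\PP_{1/2}(|T(G)-T^*_\infty|<\varepsilon)$ from below, which is an event \emph{containing}, not contained in, the exact level set; letting $\varepsilon\downarrow 0$ does not transfer this to $\PP_{1/2}(T(G)=T^*_n)$. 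What is actually needed is a quantitative statement that, for any $h\in\W$ with $T(h)=T^*_\infty$, there are $\exp\bigl(n^2(-I(h)-o(1))\bigr)$ graphs $G\in\cG_n$ with $T(G)=T^*_n$ \emph{exactly} and $\delta_\square(h^G,h)$ small. This is a combinatorial result --- a local perturbation/surgery argument in the spirit of the inverse counting lemma --- and is precisely the technical content supplied in \cite{dHMRS18} (see their Corollary~2.9 and the discussion around (3.22), cited in the proof of Lemma~\ref{lemma:convergencemicrocanonical} in Section~\ref{s.proofgap}). A pigeonhole count over the polynomially many level sets inside the shell does not close the gap either, since one needs the specific level $T^*_n$ rather than the most populous one. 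Aside from this missing ingredient, the proposal is structurally sound.
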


\noindent
Theorem~\ref{th:Limit} and the compactness of $\Wt^*$ give us a \emph{variational characterisation} of ensemble equivalence: $s_\infty = 0$ if and only if at least one of the maximisers of $\theta^*_\infty T(\tilde{h})-I(\tilde{h})$ in $\Wt$ also lies in $\Wt^* \subset \Wt$, i.e., satisfies the hard constraint.

We need the following lemma, which relates $T_\infty^*$ and $\theta_\infty^*$ without requiring knowledge  of $T_n^*$ and $\theta_n^*$. 

\begin{lemma}
\label{lemma:tuningparameter}
Under the assumptions in \eqref{eq: Assumption T} and \eqref{eq:Assumption},
\begin{equation}
\theta_\infty^* = \arg\max_{\theta \in \R} [\theta T_\infty^*-\psi_\infty(\theta)],
\end{equation}
where
\begin{equation}
\psi_\infty(\theta):=\limn\psi_n(\theta)=\sup_{\h\in\Wt}\left[\theta T(\h)-I(\h)\right].
\end{equation}
\end{lemma}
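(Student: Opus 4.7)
The statement contains two assertions: the identification $\psi_\infty(\theta) = \limn \psi_n(\theta) = \sup_{\tilde{h}\in\Wt}[\theta T(\tilde h) - I(\tilde h)]$, and the variational characterisation of $\theta^*_\infty$. I would treat them in this order, since the formula for $\psi_\infty$ is what gives meaning to the argmax problem in the second assertion.

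For the first assertion, the natural route is to combine the Chatterjee--Varadhan LDP for the empirical graphon of Erd\H{o}s--R\'enyi random graphs with Varadhan's lemma. Writing
\[
\psi_n(\theta) = \frac{1}{n^2}\log\left[2^{\binom{n}{2}}\,\EE_{\mathrm{ER}(1/2)}\!\left[\eee^{n^2\theta T(G)}\right]\right],
\]
the combinatorial prefactor contributes $\tfrac12\log 2$ to the limit. The Chatterjee--Varadhan LDP on the compact space $(\Wt,\des)$ holds at speed $n^2$ with rate function $I(\tilde h) + \tfrac12 \log 2$. Since $T(\tilde h) = t(F,\tilde h)$ is bounded and continuous with respect to $\des$, Varadhan's lemma applies without any truncation to yield
\[
\limn n^{-2}\log\EE_{\mathrm{ER}(1/2)}\!\left[\eee^{n^2\theta T(G)}\right] = \sup_{\tilde h \in \Wt}\left[\theta T(\tilde h) - I(\tilde h) - \tfrac12\log 2\right].
\]
The two occurrences of $\tfrac12\log 2$ cancel, giving the stated formula for $\psi_\infty(\theta)$ and showing that the limit exists and is finite for every $\theta\in\R$.

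For the second assertion I would use convexity. Each $\psi_n$ is smooth and convex as the scaled logarithm of the moment-generating function of $T$ under the uniform measure on $\cG_n$, and differentiation gives $\psi_n'(\theta) = \langle T\rangle_\theta$, the canonical expectation of $T$ at parameter $\theta$. The Lagrange-multiplier condition $\langle T\rangle_{\theta^*_n} = T^*_n$ is the first-order optimality condition identifying $\theta^*_n$ as the unique maximiser of $\theta\mapsto \theta T^*_n - \psi_n(\theta)$. The function $\psi_\infty$, being the pointwise limit of convex functions on $\R$, is itself convex, and a standard convex-analysis fact (convergence of subdifferentials under pointwise convergence of convex functions on an open set) lets me pass to the limit along $\theta^*_n \to \theta^*_\infty$ and $T^*_n \to T^*_\infty$ to obtain $T^*_\infty \in \partial \psi_\infty(\theta^*_\infty)$. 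This is the first-order condition characterising $\theta^*_\infty$ as a maximiser of $\theta T^*_\infty - \psi_\infty(\theta)$.

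The main obstacle is that $\psi_\infty$ need not be differentiable everywhere: non-differentiability is the analytic signature of the phase transitions in the variational problem the paper is designed to detect, and at such points multiple graphons may attain the supremum defining $\psi_\infty$. The convexity-based argument above is robust to this, since it only asserts membership in the subdifferential, and the hypothesis $\theta^*_n \to \theta^*_\infty$ (see Remark~\ref{rmk.convergence}) selects a specific subgradient so that the argmax statement is unambiguous. Without such a selection one could only conclude that $\theta^*_\infty$ lies in the (possibly interval-valued) preimage $(\partial\psi_\infty)^{-1}(T^*_\infty)$, corresponding precisely to the flat pieces of $\psi_\infty$ that signal replica symmetry breaking.
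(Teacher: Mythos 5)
Your proof is correct but takes a genuinely different route from the paper's. The paper does not re-derive the formula for $\psi_\infty$; it simply cites \cite[Theorem 3.2 and Lemma A.1]{dHMRS18} for the joint convergence $\theta_n^* T_n^* - \psi_n(\theta_n^*) \to \theta_\infty^* T_\infty^* - \psi_\infty(\theta_\infty^*)$, and then argues directly on the objective function: setting $f_n(\theta) := \theta T_n^* - \psi_n(\theta)$, it observes that $\sup_\theta f_n(\theta) = f_n(\theta_n^*) \to f_\infty(\theta_\infty^*)$ while $f_n(\theta) \to f_\infty(\theta)$ pointwise for every fixed $\theta$, whence $f_\infty(\theta) \le f_\infty(\theta_\infty^*)$ for all $\theta$. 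This is elementary and avoids convex analysis entirely, but it leans on the cited lemmas for the delicate joint limit. Your argument instead reconstructs $\psi_\infty$ from first principles via the Chatterjee--Varadhan LDP and Varadhan's lemma (correctly handling the $\tfrac12\log 2$ cancellation), and then identifies $\theta_\infty^*$ through the first-order condition $T_n^* = \psi_n'(\theta_n^*) \in \partial\psi_n(\theta_n^*)$ together with the stability of subdifferentials under pointwise convergence of convex functions (Rockafellar, Thm.~24.5). This is more self-contained and, as you note, makes transparent exactly why the argmax can fail to be unique: flat stretches of the convex minorant of $\psi_\infty$ correspond to intervals in the preimage $(\partial\psi_\infty)^{-1}(T_\infty^*)$, which is precisely the phase-coexistence phenomenon the paper is after. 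The paper's route buys brevity at the cost of opacity about where uniqueness might fail; yours buys that insight at the cost of invoking a nontrivial convex-analysis fact and re-proving the Varadhan step that the paper had already established elsewhere.
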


\begin{proof}
For every $n \in \N$,
\begin{equation}
\theta_n^* 
= \arg\max_{\theta \in \R} \big[n^2 [\theta T_n^*-\psi_n(\theta)]\big]
= \arg\max_{\theta \in \R} [\theta T_n^*-\psi_n(\theta)].
\end{equation}
Let $f_n(\theta,T_n^*) := \theta T^*_n - \psi_n(\theta)$ and $f_\infty(\theta,T_\infty^*) = \theta T^*_\infty-\psi_\infty(\theta)$. By \cite[Theorem 3.2 and Lemma A.1]{dHMRS18},
\begin{equation}
\begin{split}
f_n(\theta_n^*,T_n^*) 
&= \sup_{\theta\in\R} f_n(\theta,T_n^*) = [\theta_n^* T_n^*-\psi_n(\theta_n^*)]
\to [\theta_\infty^*T_\infty^* - \psi_\infty(\theta^*_\infty)]\\
&= \theta_\infty^* T_\infty^* - \sup_{\h\in\Wt}\, \big[\theta_\infty^* T(\h)-I(\h)\big]
= f_\infty(\theta_\infty^*,T_\infty^*), \qquad n \to \infty.
\end{split}
\end{equation}
Furthermore, for every $\theta \in \R$, $f_n(\theta,T_n^*) \leq f_n(\theta_n^*,T_n^*)$, and hence $f_\infty(\theta,T_\infty^*) = \lim_{n\to\infty} f_n(\theta,T_n^*) \leq \lim_{n\to\infty} f_n(\theta_n^*,T_n^*) = f_\infty(\theta_\infty^*,T_\infty^*)$, so that $\theta_\infty^*$ is a maximiser of $f_\infty(\cdot,T_\infty^*)$.
\end{proof}


\subsubsection{Maximal eigenvalue of the adjacency matrix}
\label{sss.maxeig}

In \cite{DGHM20} a \emph{working hypothesis} was put forward, stating that breaking of ensemble equivalence is manifested by a gap between the scaling limits of the averages of the maximal eigenvalue of the adjacency matrix of the random graph under the two ensembles. More precisely, let $\lambda_n$ denote the maximal eigenvalue of the adjacency matrix of $G \in \cG_n$. Then the working hypothesis is that 
\begin{equation}
\begin{array}{llll}
&\lim_{n\to\infty} \Delta_n \neq 0 &\Longrightarrow &\mathrm{BEE},\\ 
&\mathrm{BEE} &\Longrightarrow &\lim_{n\to\infty} \Delta_n \neq 0
\text{ apart from exceptional constraints},
\end{array}
\end{equation}         
with
\begin{equation}
\Delta_n := \Ecan[\lambda_n] - \Emic[\lambda_n].
\end{equation}
In \cite{DGHM20} this equivalence was proven for the specific example where the constraint is on all the degrees being equal to $d(n)$, with $(\log n)^\beta \leq d(n) \leq n - (\log n)^\beta$ for some $\beta \in (6,\infty)$. It turns out that BEE occurs and that $\lim_{n\to\infty} \Delta_n = 1-p$ when $\lim_{n\to\infty} n^{-1} d(n)=p \in [0,1]$, i.e., the exceptional constraints correspond to the ultra-dense regime where $p=1$. 

For our single constraint in the dense regime, we will be interested in the quantity
\begin{equation}
\delta_\infty := \lim_{n\to\infty} n^{-1} \Delta_n.
\end{equation} 


\subsection{Main results}
\label{ss.main}

In what follows, $F$ is any finite simple graph with $k$ edges, and the constraint is on the homomorphism density of $F$ being equal to $T^*_n$, as defined in Section \ref{sss.graphons}. Recall from Remark \ref{rmk.convergence} that we assume that $(T^*_n)_{n\in\N}$ and $(\theta_n)_{n\in\N}$ converge to some constants $T_{\infty}^*$ and $\theta_{\infty}^*$ respectively. For the sake of convenience, we write $T^*=T^*_\infty$ and $\theta^*=\theta_\infty^*$. In the four theorems below we allow for $k \in [1,\infty)$, although $k\in\N$ is needed to interpret the constraint in terms of a subgraph density.

\subsubsection{Parameter regime}
Our first two theorems identify the \emph{parameter regime} for BEE. 

\begin{theorem}{\bf [Computational criterion for ensemble equivalence]}
\label{thm:subgraph}
For $\theta \in [0,\infty)$ and $k \in [1,\infty)$, let $u^*(\theta,k)$ be a maximiser of
\begin{equation}
\label{eq:variationalformulachatterjee}
\sup_{u \in [0,1]} \,\big[\theta u^k-I(u)\big].
\end{equation}
(a) For every $T^* \in [(\tfrac12)^k,1)$ there is ensemble equivalence if and only if there exists a $\theta_0 = \theta_0(T^*,k) \in [0,\infty)$ such that $(u^*(\theta_0,k))^k = T^*$. In that case the Lagrange multiplier $\theta^*=\theta^*(T^*,k)$ equals $\theta_0$.\\ 
(b) There exists a unique $\that = \that(k) \in [0,\infty)$ such that $\theta^*(T^*,k) = \that$ for all $T^*$ for which there is breaking of ensemble equivalence.
\end{theorem}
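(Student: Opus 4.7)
The plan is to combine Theorem~\ref{th:Limit} with two reductions to constant graphons. First, for $\theta \ge 0$ the Chatterjee--Varadhan theory of the exponential random graph model collapses the unconstrained supremum to $\psi_\infty(\theta) = \sup_{\tilde h\in\Wt}[\theta T(\tilde h) - I(\tilde h)] = \sup_{u\in[0,1]}[\theta u^k - I(u)]$, attained at the constant graphon $\tilde h \equiv u^*(\theta,k)$. Second, for $T^* \in [(\tfrac12)^k, 1)$ a convexity/Kruskal--Katona argument shows that the constrained minimiser of $I$ on $\Wt^*$ is the constant graphon $\tilde h \equiv (T^*)^{1/k}$, so $\sup_{\tilde h \in \Wt^*}[\theta^* T(\tilde h) - I(\tilde h)] = \theta^* T^* - I((T^*)^{1/k})$. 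These two reductions turn \eqref{varreprsinfty} into the scalar identity
\begin{equation*}
s_\infty = \Big[\theta^* (u^*(\theta^*,k))^k - I(u^*(\theta^*,k))\Big] - \Big[\theta^* T^* - I((T^*)^{1/k})\Big].
\end{equation*}

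For part (a), observe that the right-hand side is the difference between the global maximum of $u \mapsto \theta^* u^k - I(u)$ and its value at $u = (T^*)^{1/k}$. Hence $s_\infty = 0$ iff $(T^*)^{1/k}$ is itself a global maximiser, equivalently (modulo the non-unique critical value $\that(k)$ introduced below) $(u^*(\theta^*, k))^k = T^*$. For the ``if'' direction, given $\theta_0 \in [0, \infty)$ with $(u^*(\theta_0, k))^k = T^*$, the bound $\psi_\infty(\theta) \ge \theta T^* - I((T^*)^{1/k})$ combined with the equality at $\theta_0$ shows that $\theta_0$ maximises $\theta T^* - \psi_\infty(\theta)$; Lemma~\ref{lemma:tuningparameter} then yields $\theta^* = \theta_0$, making the displayed identity vanish. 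The ``only if'' direction reverses this: $s_\infty = 0$ forces $(T^*)^{1/k}$ to be a global maximiser of $\theta^* u^k - I(u)$, which delivers the required $\theta_0 = \theta^*$.

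For part (b), I would analyse the scalar function $f(\theta, u) = \theta u^k - I(u)$. For every $\theta \ge 0$ it has a unique global maximiser $u^*(\theta, k)$ except at a single critical value $\that(k) \in [0, \infty)$, at which two distinct maximisers $u_1(k) < u_2(k)$ coexist, characterised by a Maxwell equal-area construction on the non-convex zone of $u \mapsto I(u)$ re-parametrised by $T = u^k$ (this zone exists for $k \ge 2$ and drives BEE). Therefore $\theta \mapsto (u^*(\theta, k))^k$ is continuous on $[0, \infty) \setminus \{\that(k)\}$ with a single jump from $u_1(k)^k$ to $u_2(k)^k$ at $\that(k)$, and the set of $T^* \in [(\tfrac12)^k, 1)$ not in its range is precisely the open interval $(u_1(k)^k, u_2(k)^k)$. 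For any $T^*$ in this interval, $T^*$ lies strictly between the left- and right-derivatives of the convex function $\psi_\infty$ at $\that(k)$, so Lemma~\ref{lemma:tuningparameter} pins down $\theta^* = \that(k)$. Uniqueness of $\that(k)$ is built into its Maxwell characterisation.

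The main obstacle is the validity of the first (unconstrained) reduction for all $\theta \ge 0$ of interest. Outside the ``replica-symmetric'' regime of Chatterjee--Varadhan/Lubetzky--Zhao, the unconstrained graphon maximiser can be non-constant, in which case the clean characterisation via \eqref{eq:variationalformulachatterjee} fails. One must therefore establish (either via a priori bounds on $\theta^*$ or via direct structural arguments specific to the range $T^* \in [(\tfrac12)^k, 1)$) that we stay in the replica-symmetric regime; this is the most delicate input to the proof and links naturally to the replica-symmetry discussion announced in the paper's introduction. The Kruskal--Katona--style claim underlying the constrained reduction is more standard but still requires separate verification.
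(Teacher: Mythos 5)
Your conclusion is correct, but the argument to get there leans on a false intermediate claim and misidentifies where the real subtlety lies. Your second reduction asserts that for every $T^* \in [(\tfrac12)^k,1)$ the infimum of $I$ over $\Wt^*$ is attained at the constant graphon $\tilde h \equiv (T^*)^{1/k}$, so that the second supremum in \eqref{varreprsinfty} collapses to $\theta^*T^* - I((T^*)^{1/k})$. This is precisely the replica symmetry assertion, and the paper's own discussion (end of Section~\ref{ss.typgraph} and Theorem~\ref{thm:entropy}) makes clear that it is \emph{false} in part of the BEE-phase: in the replica-symmetry-breaking region the minimiser of $I$ on $\Wt^*$ is not a constant graphon, and the identity you display for $s_\infty$ is only an upper bound. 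Your ``only if'' step in part (a) is stated as relying on that identity (``$s_\infty = 0$ forces $(T^*)^{1/k}$ to be a global maximiser''), which does not follow once the identity is dropped. The correct route, and the one the paper takes, never computes the constrained supremum at all: by compactness and continuity, $s_\infty = 0$ if and only if some maximiser of the \emph{unconstrained} problem $\sup_{\tilde h\in\Wt}[\theta^*T(\tilde h)-I(\tilde h)]$ lies in $\Wt^*$; by \cite[Theorem~4.1]{CD13} every such maximiser is a constant graphon $u^*(\theta^*,k)$, which lies in $\Wt^*$ iff $(u^*(\theta^*,k))^k = T^*$. No Kruskal--Katona-type claim about $\Wt^*$ is needed or used.

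You also invert the difficulty at the end: you flag the \emph{unconstrained} reduction $\psi_\infty(\theta) = \sup_{u\in[0,1]}[\theta u^k - I(u)]$ as the ``main obstacle,'' but for $\theta\geq 0$ this is unconditionally true by \cite[Theorem~4.1]{CD13} (it is the attractive exponential random graph model; there is no replica-symmetry caveat here). The genuinely delicate issue is the constrained reduction, which, as noted, the proof of this particular theorem must and can sidestep entirely. Once that is fixed, your treatment of part (b) via convex duality --- identifying $\theta^*$ as the point where $T^*$ lies in the subdifferential of $\psi_\infty$, so that $T^*$ in the gap $(u_1(k)^k, u_2(k)^k)$ forces $\theta^*=\that(k)$ --- is a clean alternative to the paper's envelope computation $m'(\theta) = T^* - (u^*(\theta))^k$, and the two are equivalent. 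You would still want to cite \cite[Proposition~3.2]{RY13} (rather than an informal Maxwell construction) for existence and uniqueness of $\that(k)$ and the at-most-two-maxima structure, and \cite[Lemma~5.1]{dHMRS18} to justify restricting to $\theta\geq 0$ when $T^*\geq(\tfrac12)^k$.
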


\begin{figure}[htbp]
\centering
\hspace{1.5cm}\includegraphics[scale=0.6]{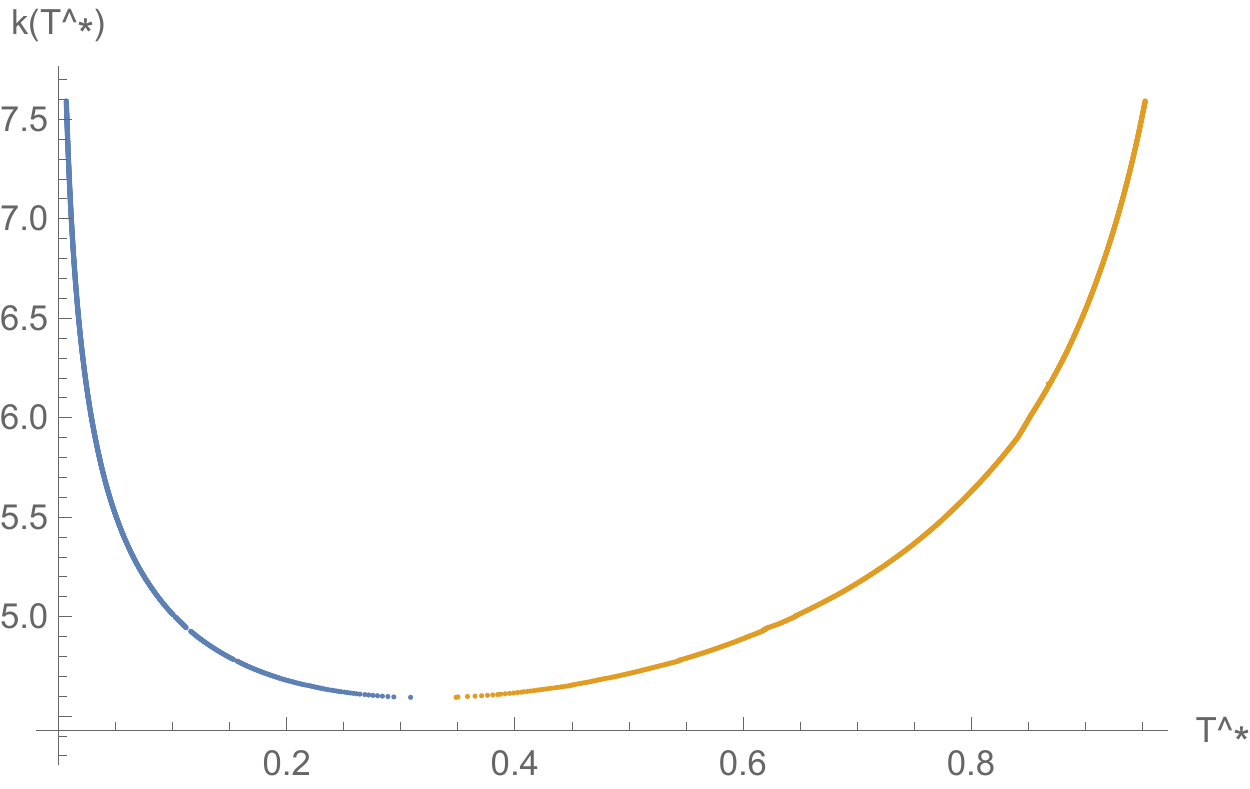}
\caption{\small A numerical picture of the phase diagram. The blue and orange lines together form the critical curve $(T^*,k_c(T^*))$. In the figure, $T^*$ is denoted by \texttt{T\^{}*} and $k_c(T^*)$ is denoted by \texttt{k(T\^{}*)}. The minimum is achieved at $k_0 = 4.591\ldots$ and $T_0 = 0.3237\ldots$.} 
\label{fig-phase}
\end{figure}

\begin{theorem}{\bf [Phase diagram]}
\label{thm:phase}
$\mbox{}$\\
(a) There exists a function $k_c \colon\, (0,1) \to [1,\infty)$ such that for every $T^*\in(0,1)$ there is ensemble equivalence when $\log_2(1/T^*) \leq k \leq k_c(T^*)$ and breaking of ensemble equivalence when $k>k_c(T^*)$. \\
(b) $T^* \mapsto k_c(T^*)$ achieves a unique minimum at the point $(T_0,k_0)$, with $k_0$ the unique solution of the equation $\frac{k_0-1}{k_0} \log(k_0-1) = 1$ and $T_0 = (\frac{k_0-1}{k_0})^{k_0}$.\\
(c) $T^* \mapsto k_c(T^*)$ is analytic on $(0,1) \setminus \{T_0\}$.\\ 
(d) $(\frac{1}{2})^{k_c(T^*)} \sim T^*$ as $T^* \downarrow 0$ and $k_c(T^*)(\frac{1}{2})^{k_c(T^*)} \sim 1-T^*$ as $T^* \uparrow 1$.
\end{theorem}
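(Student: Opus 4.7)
By Theorem~\ref{thm:subgraph}(a), for $T^*\in[(\tfrac12)^k,1)$ ensemble equivalence holds iff $T^*$ lies in the image of $\theta\mapsto(u^*(\theta,k))^k$ on $\theta\in[0,\infty)$; for $T^*<(\tfrac12)^k$ the function $u\mapsto\theta u^k-I(u)$ is strictly concave for any $\theta\le 0$ (since $u^k$ is convex for $k\ge 1$ and $I$ strictly convex), giving a continuous bijection between $\theta\in(-\infty,0]$ and $u\in(0,\tfrac12]$, hence ensemble equivalence on this trivial range. The nontrivial analysis thus reduces to $\theta\ge 0$, $T^*\ge(\tfrac12)^k$, via the critical-point equation $2\theta k u^{k-1}=\log\frac{u}{1-u}$ on $(\tfrac12,1)$, whose inverse is $\theta_k(u):=\frac{\log(u/(1-u))}{2k u^{k-1}}$. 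A short calculation shows that the log-derivative of $\theta_k$ vanishes exactly at the roots of $(k-1)L(u)=1$, where $L(u):=(1-u)\log\frac{u}{1-u}$; a further computation gives $L''(u)<0$, so $L$ is strictly concave on $(\tfrac12,1)$ with boundary values $0$ and a unique interior maximum $L_{\max}$ at $u_{\max}$ satisfying $\log\frac{u_{\max}}{1-u_{\max}}=\tfrac{1}{u_{\max}}$.

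Setting $k_0:=1+1/L_{\max}$, the two extremum conditions combine to yield $u_{\max}=(k_0-1)/k_0$, $\frac{k_0-1}{k_0}\log(k_0-1)=1$, and $T_0:=u_{\max}^{k_0}$, matching (b). For $k\le k_0$ the inequality $(k-1)L(u)\le 1$ is uniform, so $\theta_k$ is a monotone bijection $(\tfrac12,1)\to(0,\infty)$; the image of $(u^*(\theta,k))^k$ equals $[(\tfrac12)^k,1)$ and ensemble equivalence holds for every $T^*$. For $k>k_0$, $\theta_k$ has an S-shape with local maximum at $u_1(k)<u_{\max}$ and local minimum at $u_2(k)>u_{\max}$ (the two roots of $(k-1)L(u)=1$); strict convexity of $\psi_\infty$ together with the intermediate value theorem produces a unique Maxwell value $\that(k)$ at which two outer critical points $u_-(k)<u_+(k)$ attain equal values of $\that(k)u^k-I(u)$. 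The image of $(u^*(\theta,k))^k$ then becomes $[(\tfrac12)^k,u_-(k)^k]\cup[u_+(k)^k,1)$, so breaking of ensemble equivalence occurs precisely on the gap $(u_-(k)^k,u_+(k)^k)$.

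To finish (a) and (b), implicit differentiation of the Maxwell system $\{\that(u_-^k-u_+^k)=I(u_-)-I(u_+),\ \that k u_\pm^{k-1}=\tfrac12\log\frac{u_\pm}{1-u_\pm}\}$ in $k$ shows that $k\mapsto u_-(k)^k$ strictly decreases from $T_0$ to $0$ and $k\mapsto u_+(k)^k$ strictly increases from $T_0$ to $1$ on $(k_0,\infty)$; inverting the two monotone branches defines $k_c\colon(0,1)\to[k_0,\infty)\subset[1,\infty)$ with unique global minimum at $(T_0,k_0)$. For (c), analyticity of $k_c$ on $(0,1)\setminus\{T_0\}$ follows from the analytic implicit function theorem applied to the Maxwell system, whose Jacobian in $(u_-,u_+,\that)$ is non-singular for $k>k_0$; at $T_0$ the two roots coalesce at $u_{\max}$ and a square-root bifurcation $u_\pm(k)=u_{\max}\pm c\sqrt{k-k_0}+O(k-k_0)$ breaks analyticity.

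For (d): on the lower branch, as $T^*\downarrow 0$, one has $u_-(k_c)\downarrow\tfrac12$ (the lower maximiser collapses to the trivial critical point of $-I$); local expansion of the Maxwell conditions near $u=\tfrac12$ gives $u_-(k_c)^{k_c}=(\tfrac12)^{k_c}(1+o(1))$, whence $(\tfrac12)^{k_c(T^*)}\sim T^*$. On the upper branch, as $T^*\uparrow 1$, one has $u_+(k_c)\uparrow 1$; expanding $L$ and the equal-height equation near $u=1$ yields $1-u_+(k_c)\sim(\tfrac12)^{k_c}$, so $1-T^*=1-u_+(k_c)^{k_c}\sim k_c(1-u_+(k_c))\sim k_c(\tfrac12)^{k_c}$. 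The main obstacle is precisely this asymptotic analysis in (d): the outer maximisers $u_\pm(k_c)$ approach the boundary of $(\tfrac12,1)$ at exponential rates, so extracting the correct leading terms requires carefully matching the exponential factor $(\tfrac12)^k$ against the algebraic factors in $\theta_k$ and $I$.
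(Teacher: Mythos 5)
Your approach to parts (a)--(b) is genuinely more elementary than the paper's: you work directly with the inverse critical-point map $\theta_k(u)=\frac{\log(u/(1-u))}{2ku^{k-1}}$, reduce its shape to the roots of $(k-1)L(u)=1$ with $L(u)=(1-u)\log\frac{u}{1-u}$, and derive $k_0$ and $T_0$ from the strict concavity of $L$. The paper instead proves a ``connection with replica symmetry'' lemma (its Lemma \ref{lemma:convexminorantEE}), translating ensemble equivalence into membership on the convex minorant of $J_k(x)=I(x^{1/k})$ and importing results from Lubetzky--Zhao. Your derivation of $k_0$ and $T_0$ from $L'(u_{\max})=0$ is correct and replaces the paper's citation of Radin--Yin. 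Part (c) is the same argument as the paper's (implicit function theorem on the two-point tangency system, singular only at the coalescence).

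There are, however, two genuine gaps. First, the monotonicity claim ``implicit differentiation of the Maxwell system in $k$ shows that $k\mapsto u_-(k)^k$ strictly decreases \ldots and $k\mapsto u_+(k)^k$ strictly increases'' is asserted, not proved, and this is not routine. The direction of $u_+(k)$ alone is not enough: $u_+(k)$ increases toward $1$ while the exponent $k$ also increases, and the two effects on $u_+(k)^k$ pull in opposite directions. The paper needs a dedicated lemma (its Lemma \ref{lemma:mono}) that compares the bitangent line of $J_k$ with that of $J_{k'}$ via the concavity of $\partial J_k/\partial k$; you would need to actually carry out the implicit differentiation of the three-equation Maxwell system and verify the sign of $\frac{d}{dk}\bigl(u_\pm(k)^k\bigr)$, which is not simply a consequence of $u_\pm$ being monotone. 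Second, your part (d) is a sketch rather than a proof; as you yourself acknowledge, it is the main obstacle. The key missing ingredient is a quantitative control of $\that(k)$, specifically $\that(k)\to\tfrac12\log 2$ as $k\to\infty$ with two-sided bounds tight enough to pin down whether $-\tfrac12\log x$ lies above or below $\that$ in the expansion near $u=1$; without this, ``expanding the equal-height equation near $u=1$'' does not determine whether $1-u_+(k_c)\sim(\tfrac12)^{k_c}$ or differs by a factor growing with $k_c$.

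A smaller remark: your opening claim that ensemble equivalence holds ``trivially'' for $T^*<(\tfrac12)^k$ via concavity for $\theta\le 0$ is not justified, because the Chatterjee--Diaconis reduction to constant graphons (the step that lets you replace the variational problem on $\Wt$ by the one-dimensional problem on $[0,1]$) is only valid for $\theta\ge 0$. The paper explicitly disclaims this regime. This does not affect the theorem, since $k_c(T^*)\ge\log_2(1/T^*)$ ensures all its assertions lie in $T^*\ge(\tfrac12)^k$, but it should not be stated as a proved fact.
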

Note that the results above only hold in the regime $T^* \in [(\tfrac12)^k,1)$, which corresponds to the regime $\theta^*\geq0$. This assumption is necessary, since the results from \cite{CD13} that we use only hold for non-negative $\theta^*$. For $k$ and $T^*$ not in this regime, we do not know if there is ensemble equivalence.

\subsubsection{Replica symmetry}
Our last two theorems quantify the specific relative entropy and the spectral gap in the \emph{replica symmetry regime}. This regime was first defined in \cite{CV11} and further studied in \cite{LZ16}. Using the theory developed in \cite{LZ16}, it is possible to quantify the specific relative entropy $s_\infty$ and the difference of the largest eigenvalue $\delta_\infty$ for certain $T^*$ in the BEE-phase.

\begin{definition}{\bf[Replica symmetry]}
{\rm
Consider the Erd\H{o}s-R\'enyi random graph $G$ on $n$ vertices with retention probability $p \in [0,1]$ conditioned on $t(F,G)\geq T^*$ for some finite simple graph $F$. If $G$ converges in the cut metric to a constant graphon, then we say that $T^*$ is in the \emph{replica symmetric region}.}
\end{definition}

From the theory of large deviations for random graphs developed in \cite{CV11}, we know that $T^*$ is in the replica symmetric region if and only if
\begin{equation}
\label{rep1}
\inf_{t(F,f)\geq T^*}I_p(f)
\end{equation}
is minimised by a constant graphon, with $I_p$ the rate function given by
\begin{equation}
\label{rep2}
I_p(f) = \int_{[0,1]^2}\dx\dy\left(f(x,y)\,\log\frac{f(x,y)}{p}+[1-f(x,y)]\,\log\frac{1-f(x,y)}{1-p}\right).
\end{equation}
Note that $I(f) = I_{\frac{1}{2}}(f)-\frac{1}{2}\log2$. Hence, if $T^*$ is in the replica symmetric region, then there is an explicit solution for the second supremum in \eqref{varreprsinfty}. In \cite{LZ16}, it was shown that $T^*$ is in the replica symmetric region when $(T^*,I_p(T^{*\,1/d}))$ lies on the convex minorant of the function $x\mapsto I_p(x^{1/d})$, with $d$ the maximum degree of the subgraph $F$. If $F$ is regular, then the converse statement holds as well.

Fix a subgraph $F$ with $k$ edges and maximum degree $d$. Let 
\begin{equation}
T_1^*(k) \in ((\tfrac12)^k,T_0), \qquad T_2^*(k) \in (T_0,1),
\end{equation}
be the two solutions of the equation $k_c(T^*(k)) = k$, so that
\begin{equation}
(T_1^*(k),T_2^*(k)) =  \text{BEE-phase}.
\end{equation}
In Lemma \ref{lemma:convexminorantEE}, we prove that the replica symmetric region contains $[(\frac{1}{2})^k,T_1^*(d)] \cup [T_2^*(d),1]$. Thus, if $d<k$, then in part of the BEE-phase there is replica symmetry. This allows us to formulate the following two theorems (which are vacuous for $d=k$). 

\begin{theorem}{\bf [Specific relative entropy]}
\label{thm:entropy}
For every $T^*$ in the replica symmetric part of the phase of breaking of ensemble equivalence,
\begin{equation}
s_\infty = \left\{\begin{array}{ll}
\that(k)\,[T_1^*(k)-T^*] + \big[I(T^{*\,1/k})-I(T_1^*(k)^{1/k})\big] > 0, &T^* \in (T_1^*(k),T_1^*(d)],\\[0.2cm]
\that(k)\,[T_2^*(k)-T^*] + \big[I(T^{*\,1/k})-I(T_2^*(k)^{1/k})\big] > 0, &T^* \in [T_2^*(d),T_2^*(k)).
\end{array}
\right.
\end{equation}
Consequently,
\begin{equation}
s_\infty = \left\{\begin{array}{ll}
C(T_1^*(k),k)\,[T^*-T_1^*(k)]^2 + O([T-T_1^*(k)]^3),  &T^* \downarrow T_1^*(k),\\[0.2cm]
C(T_2^*(k),k)\,[T^*-T_2^*(k)]^2 + O([T-T_2^*(k)]^3),  &T^* \uparrow T_2^*(k),
\end{array}
\right.
\end{equation}
with
\begin{equation}
C(T^*,k) = \frac{T^{*\,(1-2k)/k}}{2k}\left\{\frac{1}{k}\left(1+\frac{T^{*\,1/k}}{1-T^{*\,1/k}}\right)+\left(\frac{1}{k}-1\right)\log\left(\frac{T^{*\,1/k}}{1-T^{*\,1/k}}\right)\right\}.
\end{equation}
\end{theorem}


\begin{theorem}{\bf [Spectral signature]}
\label{thm:gap}
For every $T^*$ in the replica symmetric part of the phase of breaking of ensemble equivalence,
\begin{equation}
\begin{aligned}
&\delta_\infty = \frac{T^{*\,1/k}_1\,[T^*_2(k)-T^*] + T^{*\,1/k}_2\,[T^*-T^*_1(k)]}
{T^*_2(k)-T^*_1(k)}-T^{*\,1/k}<0,\\
&T^* \in (T^*_1(k),T^*_1(d)] \cup [T^*_2(d),T^*_2(k)).
\end{aligned}
\end{equation}
Consequently, 
\begin{equation}
\delta_\infty = \left\{\begin{array}{ll}
\hat{C}(T^*_1(k),k)\,[T^*-T^*_1(k)] + O([T^*-T^*_1(k)]^2), & T^*\downarrow T^*_1(k),\\[0.2cm]
\hat{C}(T^*_2(k),k)\,[T^*-T^*_2(k)] + O([T^*-T^*_2(k)]^2), & T^*\uparrow T^*_2(k),
\end{array}
\right.
\end{equation}
with
\begin{equation}
\hat{C}(T^*,k) = \frac{T^{*\,1/k}_2(k)-T^{*\,1/k}_1(k)}{T^*_2(k)-T^*_1(k)}-\frac{1}{k} T^{*\,(1-k)/k}.
\end{equation}
\end{theorem}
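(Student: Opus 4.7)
The plan is to combine two ingredients already available: the identification in Section~\ref{ss.typgraph} of the typical graphs under the two ensembles in the replica symmetric part of the BEE-phase — the microcanonical ensemble behaves like an Erd\H{o}s-R\'enyi random graph with retention probability $T^{*\,1/k}$, the canonical ensemble like a \emph{mixture} of two Erd\H{o}s-R\'enyi random graphs with retention probabilities $T_1^{*\,1/k}(k)$ and $T_2^{*\,1/k}(k)$ — together with the classical fact that the top eigenvalue of a graphon operator is continuous in cut distance and that $n^{-1}\lambda_n \to p$ for $G(n,p)$.

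First I would fix the coexistence weights $\alpha,1-\alpha$ of the canonical mixture. By Theorem~\ref{thm:subgraph}(b), the Lagrange multiplier is the constant $\that(k)$ throughout the BEE-phase, and the only maximisers of $u\mapsto\that(k)u^k-I(u)$ on $[0,1]$ are the two values $u_i=T_i^{*\,1/k}(k)$, $i=1,2$, bordering the BEE-phase; the corresponding constant graphons are therefore the only maximisers of $\that(k)T(\tilde h)-I(\tilde h)$ on $\Wt$. The soft constraint $\langle T\rangle = T^*$ then reads $\alpha\,u_1^k+(1-\alpha)\,u_2^k=T^*$, giving
\begin{equation*}
\alpha=\frac{T_2^*(k)-T^*}{T_2^*(k)-T_1^*(k)},\qquad 1-\alpha=\frac{T^*-T_1^*(k)}{T_2^*(k)-T_1^*(k)}.
\end{equation*}

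Next I would pass to the maximum eigenvalue. Since $n^{-1}\lambda_n\in[0,1]$ is uniformly bounded and the top eigenvalue of a graphon operator is continuous in cut distance, convergence in cut distance of the empirical graphons (with uniform integrability being automatic) yields
\begin{equation*}
n^{-1}\Emic[\lambda_n]\to T^{*\,1/k},\qquad n^{-1}\Ecan[\lambda_n]\to\alpha\,T_1^{*\,1/k}(k)+(1-\alpha)\,T_2^{*\,1/k}(k),
\end{equation*}
and subtracting gives the claimed closed form for $\delta_\infty$. Strict negativity follows from the strict concavity of $x\mapsto x^{1/k}$ on $(0,1)$ (with $k>1$ in the BEE-phase, by Theorem~\ref{thm:phase}(b)) applied to the identity $T^*=\alpha\,T_1^*(k)+(1-\alpha)\,T_2^*(k)$.

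Finally, writing $\Phi(T^*):=\delta_\infty$, one has $\Phi(T_1^*(k))=\Phi(T_2^*(k))=0$, and since the only $T^*$-dependence is through $\alpha(T^*)$ and through $T^{*\,1/k}$, a direct differentiation gives
\begin{equation*}
\Phi'(T^*)=\frac{T_2^{*\,1/k}(k)-T_1^{*\,1/k}(k)}{T_2^*(k)-T_1^*(k)}-\frac{1}{k}T^{*\,(1-k)/k}=\hat C(T^*,k).
\end{equation*}
A first-order Taylor expansion at the two endpoints $T_1^*(k)$ and $T_2^*(k)$ then produces the stated linear asymptotics. The main obstacle in the plan lies in the first step: justifying rigorously that the canonical distribution concentrates on the \emph{convex} mixture of the two constant graphons with exactly the coexistence weights $\alpha,1-\alpha$ dictated by the soft constraint, rather than on any other combination of the two maximisers. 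This is the phase-coexistence mechanism advertised in the introduction and is handled by a careful large-deviation analysis of the canonical partition function around its two equal-height maximisers, with the soft constraint pinning the mixture fraction.
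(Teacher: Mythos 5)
Your proposal is correct and matches the paper's own proof, which establishes canonical concentration on the two-point mixture (Lemma~\ref{lemma:convergencecanonicalminimisers} and Corollary~\ref{cor:convergencecanonical}), pins the mixture weights via the soft constraint exactly as you describe, establishes microcanonical concentration on the constant graphon (Lemma~\ref{lemma:convergencemicrocanonical}), and then passes to $n^{-1}\lambda_n=\|h^{G_n}\|_{\op}$ using continuity and boundedness of the operator norm, strict concavity of $x\mapsto x^{1/k}$, and a Taylor expansion. The obstacle you flag at the end is precisely what Lemma~\ref{lemma:convergencecanonicalminimisers} and Corollary~\ref{cor:convergencecanonical} handle, by adapting the Chatterjee--Diaconis concentration argument and combining tightness, Prokhorov's theorem, and the convergence $\Ecan[t(F,G_n)]\to T^*$ to determine the mixture fraction.
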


\begin{figure}[htbp]
\centering
\hspace{1.5cm}\includegraphics[scale=0.8]{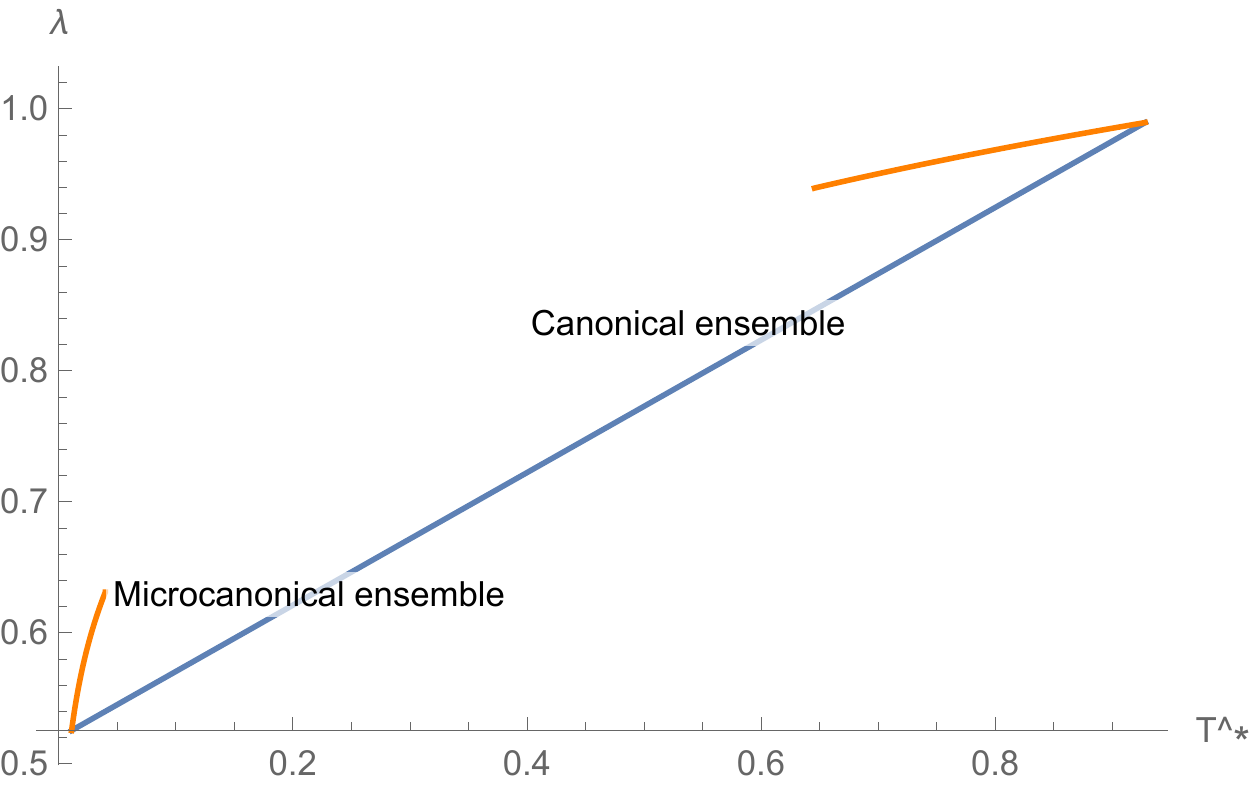}
\caption{\small A numerical picture of the average largest eigenvalue $\lambda=\limn\frac{1}{n}\mathbb{E}[\lambda_n]$ of the adjacency matrix under the microcanonical ensemble (top curve) and the canonical ensemble (bottom curve), as a function of $T^*$ for a subgraph $F$ with $k=7$ edges and maximum degree $d=5$. The top curve is shown only for $T^*$ in the replica symmetric region. In the region of replica symmetry breaking we have no explicit expression for $\lambda$ under the microcanonical ensemble.}
\label{fig-eigenvalue}
\end{figure}


\subsection{Typical graph under the microcanonical and canonical ensemble}
\label{ss.typgraph}

The BEE-phase can also be characterised through convergence of the random graph drawn from the two ensembles. In Lemmas \ref{lemma:convergencecanonicalminimisers} and \ref{lemma:convergencemicrocanonical} below we show that the random graph drawn from the canonical ensemble converges to the maximiser(s) of the first supremum of \eqref{varreprsinfty}, while the random graph drawn from the microcanonical ensemble converges to the maximiser(s) of the second supremum of \eqref{varreprsinfty}. 

Outside the BEE-phase, both suprema are attained by the constant graphon $h\equiv T^{*\,1/k}$, meaning that for large $n$ both ensembles behave approximately like the Erd\H{o}s-R\`enyi random graph with retention probability $p=T^{*\,1/k}$. Inside the BEE-phase, the first supremum is maximised by the two constant graphons $T_1^*(k)^{1/k}$ and $T_2^*(k)^{1/k}$, neither of which lies in $\Wt^*$. Consequently, the random graph drawn from the canonical ensemble converges to the random graphon
\begin{equation}
\frac{T_2^*(k)-T^*}{T_2^*(k)-T_1^*(k)}\,\delta_{T_1^*(k)^{1/k}}
+ \frac{T^*-T_1^*(k)}{T_2^*(k)-T_1^*(k)}\,\delta_{T_2^*(k)^{1/k}},
\end{equation}
meaning that for large $n$ the canonical ensemble behaves approximately like a \emph{mixture} of two Erd\H{o}s-R\'enyi random graphs. If $T^*$ is in the replica symmetric part of the BEE-phase, then the second supremum is still minimised by the constant graphon $h \equiv T^{*\,1/k}$. Hence, the random graph is asymptotically deterministic under the microcanonical ensemble and random under the canonical ensemble. Thus, BEE occurs due to \emph{coexistence} of two densities. This is similar in spirit to the coexistence of water and ice at the melting point, at which a first-order phase transition between water and ice occurs.   

In the region of replica symmetry breaking, the maximisers of the second supremum are unknown, and it is not even known whether or not there is a unique maximiser. In case of non-uniqueness, also under the microcanonical ensemble the random graph is asymptotically random.


\subsection{Discussion and outline}
\label{ss.disc}

{\bf 1.}
Theorem~\ref{thm:subgraph} reduces the variational formula on $\Wt$ to a variational formula on $[0,1]$, and is an application of a reduction principle explained in \cite{CD13} (see also \cite{C15}). The proof relies on the variational characterisation in Theorem \ref{th:Limit}. The main difficulty lies in computing the tuning parameter $\theta^*$ as a function of the density $T^*$, which is resolved through Lemma \ref{lemma:tuningparameter}. The proof follows from an analysis of the two variational expressions, for which we rely in part on the results in \cite{RY13}. From Theorem~\ref{thm:subgraph}, for each $k$ we can identify the BEE-phase as follows. The expression in \eqref{eq:variationalformulachatterjee} has at most two local maximisers $u_1^*(\theta)<u_2^*(\theta)$, which are both increasing in $\theta$. For $\theta<\that$, $u_1^*(\theta)$ is the global maximiser, for $\theta>\that$, $u_2^*(\theta)$ is the global maximiser, and for $\theta=\that$, $u_1^*(\theta)$ and $u_2^*(\theta)$ are both global maximisers. Hence, the values $u\in(u_1^*(\theta),u_2^*(\theta))$ can never be a global maximiser, and so the BEE-phase contains $(u_1^*(\theta)^k,u_2^*(\theta)^k)$. Since $u_1^*(0)=\frac{1}{2}$ and $\lim_{\theta\rightarrow\infty}u_2^*(\theta)=1$, the interval $(u_1^*(\theta)^k,u_2^*(\theta)^k)$ is the entire BEE-phase.

\medskip\noindent
{\bf 2.}
Theorem~\ref{thm:phase} identifies the BEE-phase and captures the main properties of the critical curve bordering this phase. The proof relies on Lemma \ref{lemma:convexminorantEE} below, which allows us to use results from \cite{LZ16} and establish a connection between ensemble equivalence and replica symmetry, in the sense that $T^*$ lies in the BEE-phase for a subgraph with $k$ edges if and only if $T^*$ lies in the region of replica symmetry breaking for $p=\frac12$ and a $k$-regular subgraph (recall \eqref{rep1}--\eqref{rep2}). This connection is purely analytic: it establishes equivalence of variational formulas and implies that the graph in Figure \ref{fig-phase} is a cross-section of the curves in \cite[Figure 2]{LZ16} at $p=\frac12$. It is not clear, however, how to probabilistically interpret the relationship between replica symmetry for regular subgraphs and breaking of ensemble equivalence for general graphs. Note that we do not require any regularity of the subgraph $F$, and also the degrees of $F$ do not play any role. It might be easier to use the variational formula in \eqref{eq:variationalformulachatterjee} (with $I_p$ instead of $I$) to analyse replica symmetry, rather than the convex minorant of $ x \mapsto I_p(x^{1/k})$.

\medskip\noindent
{\bf 3.}
Theorem~\ref{thm:entropy} gives an explicit formula for the specific relative entropy $s_\infty$ in part of the BEE-phase. The proof exploits the connection with replica symmetry. If a subgraph has more edges than its maximal degree (i.e., is not a $k$-star), then the BEE-phase near $T_1^*(k)$ and $T_2^*(k)$ is replica symmetric. This implies that the second supremum in \eqref{varreprsinfty} also has a constant maximiser, which allows us to explicitly compute $s_\infty$. It turns out that the relative entropy undergoes a second-order phase transition as $T^*$ approaches the critical curve. 

\medskip\noindent
{\bf 4.}
Theorem~\ref{thm:gap} shows that the working hypothesis put forward in \cite{DGHM20} is met in the replica symmetric part of the BEE-phase. A random graph drawn from the canonical ensemble converges to a constant graphon whose height is a random mixture of the two maximisers $u_1,u_2$ of \eqref{eq:variationalformulachatterjee}. The average largest eigenvalue converges to a value on the line segment connecting $(u_1^{1/k},u_1)$ and $(u_2^{1/k},u_2)$. In the region of replica symmetry, a random graph drawn from the microcanonical ensemble converges to the constant graphon whose height is $(T^*)^{1/k}$, as illustrated in Figure \ref{fig-eigenvalue}. Note that the average largest eigenvalue is larger in the microcanonical ensemble than in the canonical ensemble, contrary to what was found in \cite{DGHM20}, where the constraint was on the degree sequence. It turns out that $\delta_\infty$ undergoes a first-order phase transition as $T^*$ approaches the critical curve. 

\medskip\noindent
{\bf 5.} The numerical picture of the phase diagram in Figure \ref{fig-phase} was made using Mathematica. The computations involve finding an approximate value of $\that(k)$ for each $k$ (up to an accuracy of 5 digits), and computing $u_1^*(\that(k),k)$ and $u_2^*(\that(k),k)$. The dotted lines are formed by the points $(u_1^*(k)^k,k)$ and $(u_2^*(k)^k,k)$. This is done for $k$ starting at 4.592 and increasing with increments of 0.002.

\medskip\noindent
{\bf 6.}
In \cite{T15}, BEE for interacting particle systems was studied at three different levels: thermodynamic, macrostate and measure. It was shown that these levels are in fact equivalent. A general formalism was put forward, based on an abstract large deviation principle, linking the occurrence of BEE to non-convexity of the rate function associated with the microcanonical ensemble as a function of the parameters controlling the constraint. In our context, the large deviation principle for graphons in \cite{CV11} provides the conceptual basis for identifying the BEE-phase via the variational formula derived in \cite{dHMRS18}, and the link with the convex minorant mentioned in item 2 fits in with the picture provided in \cite{T15}.  

\medskip\noindent
{\bf Outline.}
The remainder of the paper is organised as follows. Theorems~\ref{thm:subgraph}--\ref{thm:gap} are proved in Sections~\ref{s.proofmain}--\ref{s.proofgap}, respectively.


\section{Proof of Theorem~\ref{thm:subgraph}}
\label{s.proofmain}

Throughout the proof, we fix $k\in\N$, and suppress $k$ from the notation. We analyse the expression
\begin{equation}
\label{eq:variationalformularelativeentropy}
\sup_{\h \in \W} \,[\theta T(\h) - I(\h)\big]
\end{equation}
with $\theta \in [0,\infty)$, and determine for which values of $T^*$ a maximiser of this supremum is in the set $\Wt^*$. Note that it suffices to consider $\theta\in[0,\infty)$, since $T^*\geq(\frac{1}{2})^k$. This was shown in \cite[Lemma 5.1]{dHMRS18} in the case that $F$ is a triangle, but the proof generalizes to general finite simple graphs.

By \cite[Theorem 4.1]{CD13}, the supremum equals the supremum in \eqref{eq:variationalformulachatterjee}, and each maximiser of \eqref{eq:variationalformularelativeentropy} is a constant function, where the constant is a maximiser of \eqref{eq:variationalformulachatterjee}. Furthermore, by Lemma \ref{lemma:tuningparameter}, $\theta^*$ is a maximiser of the supremum
\begin{equation}
\sup_{\theta \geq 0} \,\big[\theta T^* - \theta T(u^*(\theta)) + I(u^*(\theta))\big] 
= \sup_{\theta \geq 0}\,\big[\theta T^* - \theta (u^*(\theta))^k + I(u^*(\theta))\big],
\end{equation}
where $u^*(\theta)$ is a maximiser of \eqref{eq:variationalformulachatterjee}. By \cite[Proposition 3.2]{RY13}, $l_\theta(u) := \theta u^k - I(u)$ has at most 2 maxima and there exists a $\hat{\theta}$ such that, for $\theta<\hat{\theta}$, the first local maximum is the unique global maximum and, for $\theta>\hat{\theta}$, the second local maximum is the unique global maximum. Hence, for all $\theta\neq\hat{\theta}$, $u^*(\theta)$ is well-defined. For $\theta=\that$, both maxima are a global maximum. In that case, we let $u^*(\theta)$ denote either of the two maximisers.

Let $m(\theta) = \theta T^* - l_\theta(u^*(\theta)) =\theta T^*-\theta(u^*(\theta))^k + I(u^*(\theta))$. In Figure \ref{fig-variational formula}, plots of $l_\theta$ are shown for several values of $\theta$. Write $u:=u^*(\theta)$ and $u':=\frac{\partial u}{\partial\theta}(\theta)$. Then [kijk nog even naar]
\begin{equation}
\begin{split}
\label{eq:lderivative}
l_\theta'(u) = \theta ku^{k-1} - \tfrac{1}{2}\log u + \tfrac{1}{2}\log(1-u) = 0
\end{split}
\end{equation}
and
\begin{equation}
\begin{split}
m'(\theta) 
&= T^*-u^k-\theta ku^{k-1}u' + \tfrac{1}{2}u'\log(u)-\tfrac{1}{2}u'\log(1-u)\\
&= T^* - u^k - u'(\tfrac{1}{2}\log u-\tfrac{1}{2}\log(1-u)-\theta ku^{k-1})\\
&= T^*-u^k.
\end{split}
\end{equation}
Hence, if there exists a $\theta_0\geq0$ such that $(u^*(\theta_0))^k=T^*$, then $m'(\theta_0)=0$, and so $\theta^*=\theta_0$. In that case $(u^*(\theta^*))^k=T^*$, so there is ensemble equivalence. If such a $\theta_0$ does not exist, then there is breaking of ensemble equivalence.

Let $u_1^*(\theta)$ and $u_2^*(\theta)$ be the first and second local maximum of $l_\theta$, respectively. Then $\theta \mapsto u_1^*(\theta)$ and $\theta \mapsto u_2^*(\theta)$ are increasing. Furthermore, for all $\theta<\that$, $u_1^*(\theta)$ is the unique global maximum, while for all $\theta>\that$, $u_2^*(\theta)$ is the unique global maximum. Hence, if there is breaking of ensemble equivalence, then $m'(\theta)>0$ for $\theta<\that$ and $m'(\theta)<0$ for $\theta>\that$. We conclude that $\theta^*=\that$.

\begin{figure}[htbp]
\centering 
\includegraphics[scale=0.35]{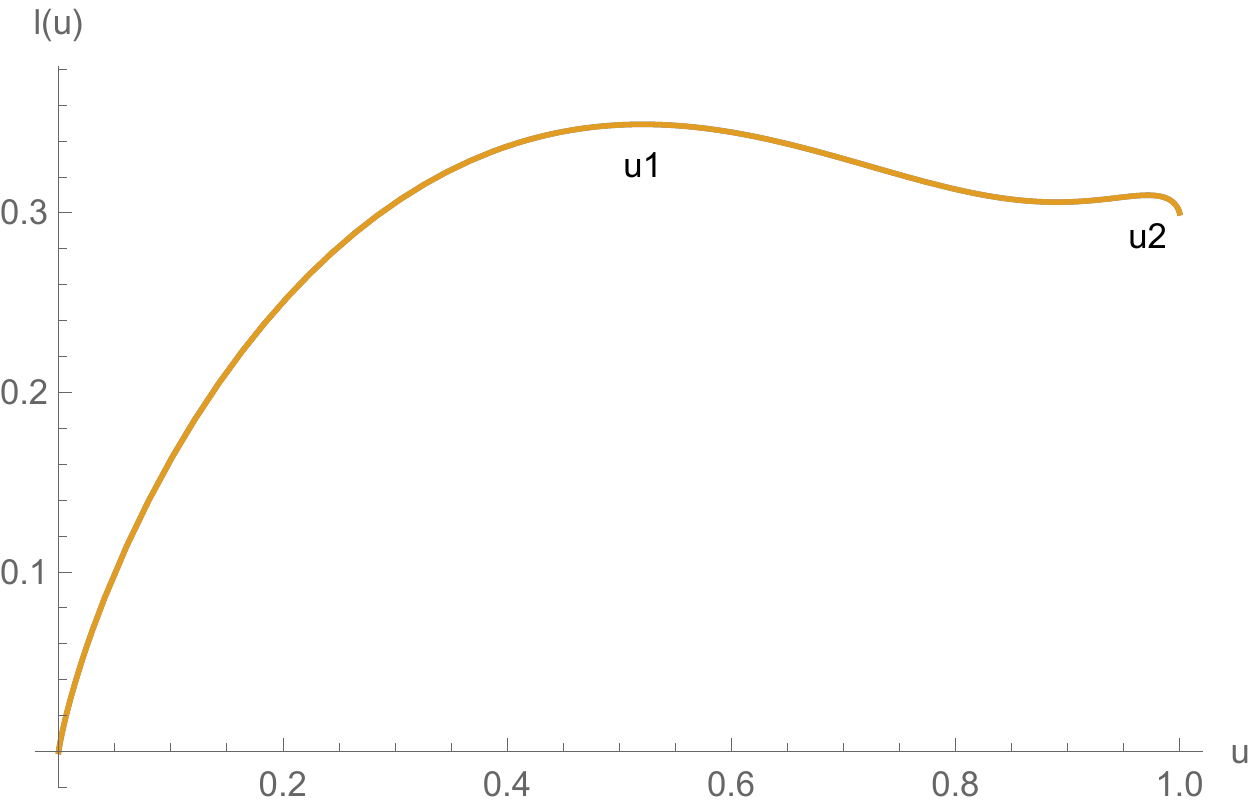}
\quad
\includegraphics[scale=0.35]{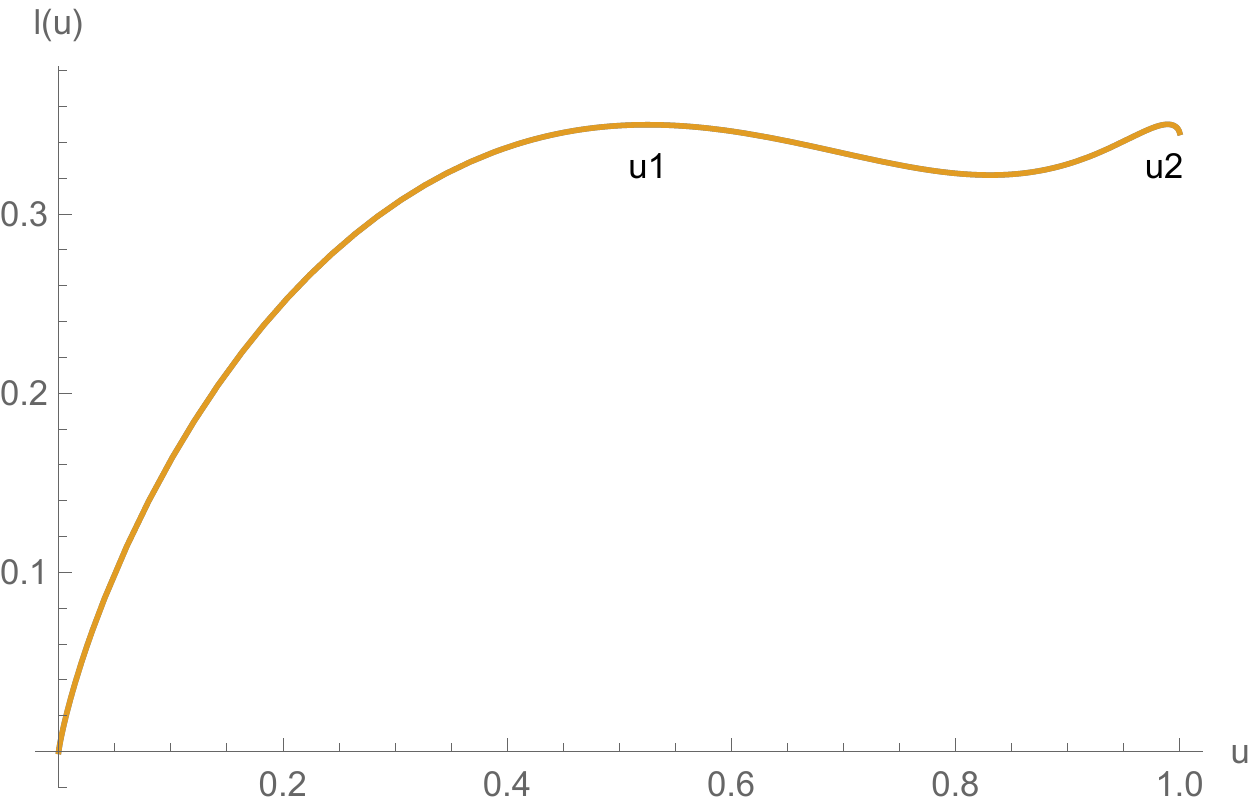}
\quad
\includegraphics[scale=0.35]{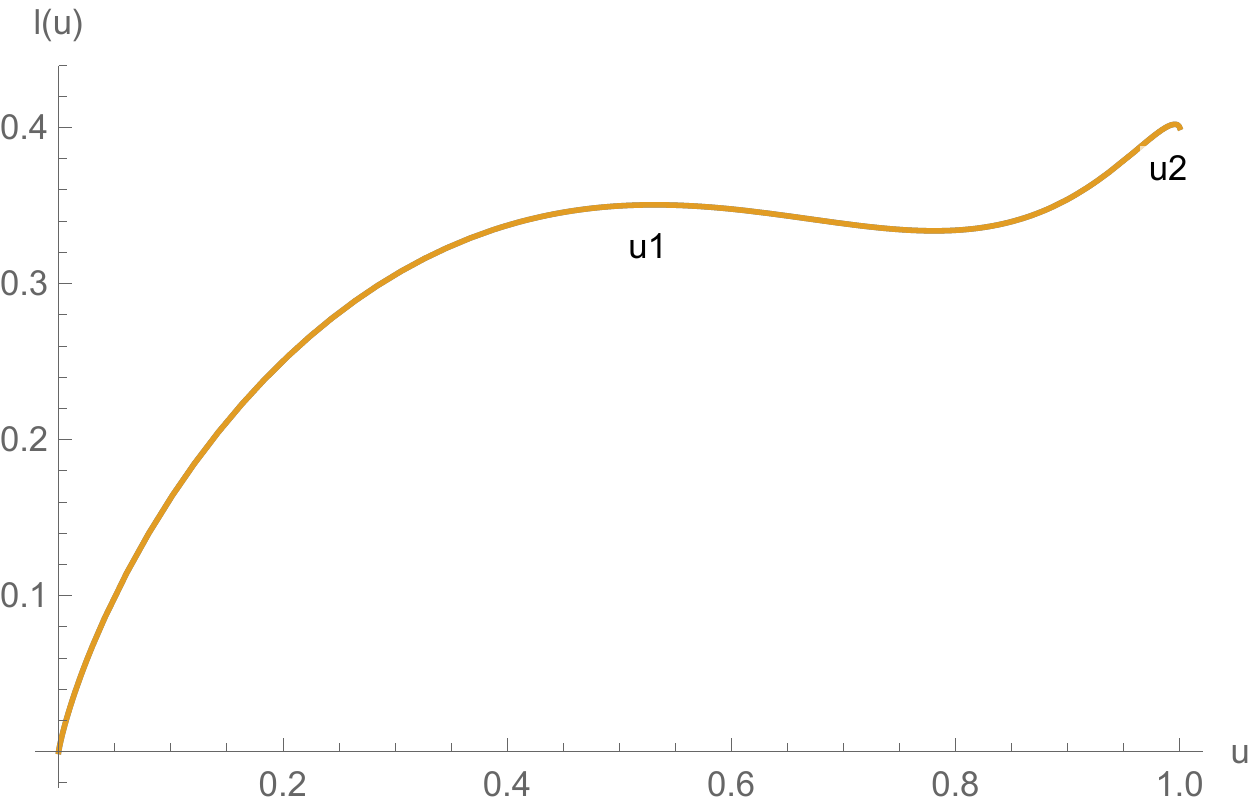}
\caption{\small Three plots of $l_{\theta}(u)$ for $k=7$ and $\theta=0.3$, $\theta=\that(7)$ and $\theta=0.4$, respectively. For $\theta=0.3$, $u_1^*(\theta)$ is the global maximiser, for $\theta=\that(7)$, $u_1^*(\theta)$ and $u_2^*(\theta)$ are both global maximisers, and for $\theta=0.4$, $u_2^*(\theta)$ is the global maximiser. In the figures, the function $l_\theta(u)$ is denoted by \texttt{l(u)}. The local maximisers $u_1^*(\theta)$ and $u_2^*(\theta)$ are denoted by \texttt{u1} and \texttt{u2} respectively. The BEE-phase is $(u_1^*(\that)^k,u_2^*(\that)^k)$.}
\label{fig-variational formula}
\end{figure}


\section{Proof of Theorem~\ref{thm:phase}}
\label{s.proofphase}

We first fix some notation. For given $k$ and $\theta$, let $u_1^*(\theta,k)$ and $u_2^*(\theta,k)$ be the first and second local maximum respectively of $l_{\theta,k}(u)=\theta u^k-I(u)$. Let $\that(k)$ be the unique value of $\theta$ such that $u_1^*(\that(k),k)=u_2^*(\that(k),k)$. Define $J_k(x)=I(x^{1/k})$ and $T_1(k)=u_1^*(\that(k),k)^k$, $T_2(k)=u_2^*(\that(k),k)^k$.

\paragraph{Existence of $k_c$.}

Lemmas~\ref{lemma:convexminorantEE}--\ref{lemma:mono} below establish the existence of the critical curve. Lemma \ref{lemma:convexminorantEE} shows the connection between replica symmetry and ensemble equivalence as discussed in Section \ref{ss.disc}, since $T$ is in the region of replica symmetry for $p=\frac{1}{2}$ if and only if $(T,I(T^{1/k}))$ lies on the convex minorant of $J_k$.

\begin{lemma}{\bf [Connection with replica symmetry]}
\label{lemma:convexminorantEE}
Let $k\geq 1$ and $T\in[(\frac{1}{2})^k,1)$. There is ensemble equivalence for $T^*=T$ if and only if $(T,I(T^{1/k}))$ lies on the convex minorant of the function $J_k$.
\end{lemma}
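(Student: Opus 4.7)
The plan is to reduce everything via Theorem~\ref{thm:subgraph} to a standard Legendre-transform characterisation and then invoke convex analysis.

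\textbf{Step 1 (Reduction via Theorem~\ref{thm:subgraph}).} By Theorem~\ref{thm:subgraph}(a), there is ensemble equivalence at $T^* = T$ if and only if there exists $\theta_0 \in [0,\infty)$ and a maximiser $u^*(\theta_0,k)$ of $\sup_{u\in[0,1]}[\theta_0 u^k - I(u)]$ with $(u^*(\theta_0,k))^k = T$. So the question becomes: for which $T$ does there exist $\theta_0 \ge 0$ such that some maximiser $u$ of $l_{\theta_0,k}$ satisfies $u^k = T$?

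\textbf{Step 2 (Change of variable).} Substituting $x = u^k$, the map $u\mapsto u^k$ is a bijection from $[0,1]$ onto $[0,1]$, so
\begin{equation*}
\sup_{u\in[0,1]}\bigl[\theta u^k - I(u)\bigr] = \sup_{x\in[0,1]}\bigl[\theta x - J_k(x)\bigr],
\end{equation*}
and moreover $u$ maximises the left-hand side iff $x=u^k$ maximises the right-hand side. The right-hand side is the (unrestricted-in-$\theta$) Legendre transform of $J_k$ at $\theta$. Thus ensemble equivalence at $T$ is equivalent to the existence of $\theta_0 \ge 0$ for which $T$ is a maximiser of $x \mapsto \theta_0 x - J_k(x)$ on $[0,1]$.

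\textbf{Step 3 (Convex-analytic characterisation).} That $T$ maximises $\theta_0 x - J_k(x)$ is equivalent to the affine map $L(x):= J_k(T) + \theta_0(x-T)$ satisfying $L(x) \le J_k(x)$ for all $x \in [0,1]$ and $L(T) = J_k(T)$; that is, $L$ is a supporting affine minorant of $J_k$ passing through $(T,J_k(T))$. By standard convex analysis, such a supporting line with some slope $\theta_0 \in \mathbb{R}$ exists iff $(T, J_k(T))$ lies on the convex minorant $\overline{J_k}$ of $J_k$ on $[0,1]$.

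\textbf{Step 4 (Non-negativity of the slope).} It remains to check that on the range $T \in [(\tfrac12)^k,1)$ the supporting slope can always be taken non-negative (matching the restriction $\theta_0 \ge 0$ inherited from Theorem~\ref{thm:subgraph}). The function $I$ attains its minimum on $[0,1]$ at $u=\tfrac12$, hence $J_k(x) = I(x^{1/k})$ attains its minimum on $[0,1]$ at $x = (\tfrac12)^k$. Therefore $\overline{J_k}$ is non-decreasing on $[(\tfrac12)^k, 1)$, so any supporting line at a point $(T, J_k(T))$ on $\overline{J_k}$ with $T \in [(\tfrac12)^k, 1)$ has slope $\theta_0 \ge 0$. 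Combining Steps 1--4 yields the claimed equivalence.

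The main obstacle is the careful handling of the $\theta_0 \ge 0$ constraint; this is settled by the monotonicity of $\overline{J_k}$ on the relevant range established in Step 4. The change of variable in Step 2 and the Legendre-transform characterisation in Step 3 are otherwise routine.
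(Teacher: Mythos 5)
Your proof is correct, and it takes a genuinely different route from the paper. The paper invokes \cite[Appendix A]{LZ16}, which characterises the set of points not on the convex minorant of $J_k$ as an open interval $(q_1^k,q_2^k)$ via a common-tangent condition, and then identifies $q_1,q_2$ with the two global maximisers $u_1^*,u_2^*$ of $l_{\hat\theta,k}$ from Theorem~\ref{thm:subgraph}. Your argument, by contrast, makes no reference to the two-tangent structure: you observe that, after the bijective change of variable $x=u^k$, Theorem~\ref{thm:subgraph}(a) asks precisely for a supporting affine minorant of $J_k$ at $(T,J_k(T))$ with slope $\theta_0\ge 0$, and standard convex analysis gives the iff with membership in the convex minorant, modulo the sign constraint on the slope. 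The sign constraint is then discharged cleanly by the observation that $J_k$ (hence its convex minorant) is minimised at $(\tfrac12)^k$, so for $T\ge(\tfrac12)^k$ all supporting slopes at points of touching are non-negative. What your approach buys is self-containment and a cleaner picture (the lemma really is a Legendre-transform statement in disguise); what the paper's approach buys is an explicit identification of the critical endpoints $u_1,u_2$ with the $q_1,q_2$ of \cite{LZ16}, which is then reused verbatim in the proofs of Theorem~\ref{thm:phase} and Theorem~\ref{thm:entropy}. One small point worth making explicit in your Step~3 is that $T<1$ and $T\ge(\tfrac12)^k>0$ place $T$ in the interior of $[0,1]$, so that the convex minorant admits a subgradient at $T$; this is needed for the ``only if'' direction of the standard convex-analysis fact you invoke.
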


\begin{proof}
Note that $I(x)=I_{1/2}(x)-\frac{1}{2}\log2$ (recall \eqref{rep2}), so $(T,I(T^{1/k}))$ lies on the convex minorant of $J_k$ if and only if $(T,I_{1/2}(T^{1/k}))$ lies on the convex minorant of the function $x\mapsto I_{1/2}(x^{1/k})$. 

In \cite[Appendix A]{LZ16}, it is shown that there exist $q_1,q_2\in(0,1)$ such that $(q^k,I(q))$ is not on the convex minorant of $J$ if and only if $q^k\in(q_1^k,q_2^k)$. The values $q_1,q_2$ are defined as the unique values in $[0,1]$ such that the tangent lines of $J$ at $q_1^k$ and $q_2^k$ are the same, i.e., $J'(q_1^k)=J'(q_2^k)=:D$ and $J(q_1^k)+D(q_2^k-q_1^k)=J(q_2^k)$, or equivalently, $Dq_1^k-J(q_1^k)=Dq_2^k-J(q_2^k)$.

Recall from Section \ref{ss.disc} that there is breaking of ensemble equivalence for $T^*=T\in[(\frac{1}{2})^k,1)$ if and only if $T\in(u_1^k,u_2^k)$, where $u_1=u_1^*(\that(k),k)$ and $u_2=u_2^*(\that(k),k)$. Since $u_1,u_2$ are the maximisers of $x\mapsto \that x^k-I(x)$ and $x\mapsto x^k$ is monotone, we have that $T_1:=u_1^k$ and $T_2:=u_2^k$ are the maximisers of $x\mapsto \that x-I(x^{1/k})=\that x-J(x)$. Hence, $J'(T_1)=J'(T_2)=\that$. Furthermore, $\that$ was defined such that $\that u_1^k-I(u_1)=\that u_2^k-I(u_2)$, so $\that T_1-J(T_1)=\that T_2-J(T_2)$.

From the above, we conclude that $u_1=q_1$ and $u_2=q_2$. This completes the proof.
\end{proof}

There is ensemble equivalence for $T^*\leq T_1(k)$ and $T^*\geq T_2(k)$, and ensemble inequivalence for $T^*\in(T_1(k),T_2(k))$. By \cite[Lemma A.5]{LZ16}, $k \mapsto u_1^*(\that,k)$ is decreasing and $k \mapsto u_2^*(\that,k)$ is increasing. Although $k\mapsto (u_1^*(\that,k))^k$ is clearly decreasing, it is not a priori obvious whether $k \mapsto (u_2^*(\that,k))^k$ is increasing, since $u_2^*(\that,k)<1$. If the latter is the case, then for all $k>k_c(T^*)$ there is breaking of ensemble equivalence, and for all $k\leq k_c(T^*)$ there is ensemble equivalence, where $k_c(T^*)$ is chosen such that $T^*=T_1(k_c)$ or $T^*=T_2(k_c)$. This proves the first part of Theorem \ref{thm:phase}. Also, since $T_1(k)\geq(\frac{1}{2})^{k}$, this also shows that $k_c\geq\log_2(1/T^*)$. The following lemma fills in the gap.

\begin{lemma}{\bf [Monotonicity]}
\label{lemma:mono}
The function $k \mapsto T_1(k)$ is decreasing and $k \mapsto T_2(k)$ is increasing.
\end{lemma}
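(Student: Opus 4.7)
I split the lemma into its easy half (monotonicity of $T_1$) and its hard half (monotonicity of $T_2$). For the easy half the plan is immediate: since $k \mapsto u_1^*(\that(k), k) \in (1/2, 1)$ is decreasing in $k$ by \cite[Lemma A.5]{LZ16}, and for any fixed $u \in (0,1)$ the map $k \mapsto u^k$ is also decreasing, the two monotonicities pull in the same direction and we get, for $k<k'$,
\begin{equation*}
T_1(k') = u_1^*(\that(k'), k')^{k'} < u_1^*(\that(k'), k')^k < u_1^*(\that(k), k)^k = T_1(k).
\end{equation*}

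For the hard half the two effects compete (raising a larger base that is still strictly less than $1$ to a higher power), so a real argument is needed. The plan is to use the \emph{Maxwell} or \emph{double-tangent} reformulation of the critical triple: setting $J_k(v) := I(v^{1/k})$, the pair $(T_1(k), T_2(k))$ is characterised by
\begin{equation*}
J_k'(T_1(k)) = J_k'(T_2(k)) = \that(k), \qquad J_k(T_2(k)) - J_k(T_1(k)) = \that(k)\,[T_2(k)-T_1(k)],
\end{equation*}
i.e.\ $T_1(k)$ and $T_2(k)$ are the two tangent points of the unique double tangent to $J_k$. I would then differentiate all three identities in $k$: the two tangency identities cancel the $T_i'(k)$-terms inside the derivative of the chord identity and yield the closed-form expression $\that'(k) = [\partial_k J_k(T_2(k)) - \partial_k J_k(T_1(k))]/[T_2(k)-T_1(k)]$. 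Substituting this back into the derivative of the right tangency identity then produces
\begin{equation*}
T_2'(k) = \frac{\that'(k) - \partial_k J_k'(T_2(k))}{J_k''(T_2(k))},
\end{equation*}
whose denominator is non-negative because $J_k$ agrees with its convex minorant on $[T_2(k), 1]$ and hence is locally convex at $T_2(k)$ from the right.

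The main obstacle is the sign of the numerator. By the mean value theorem it equals $\partial_k J_k'(\xi) - \partial_k J_k'(T_2(k))$ for some $\xi \in (T_1(k), T_2(k))$, so the goal reduces to verifying, by direct computation from $J_k(v) = I(v^{1/k})$, that $v \mapsto \partial_k J_k'(v)$ is monotone on a neighbourhood of $[T_1(k), T_2(k)]$ with the correct sign. Equivalently, in the original coordinates one has $T_2'(k) = u_2(k)^k \log u_2(k) + k\,u_2(k)^{k-1}\,u_2'(k)$, and what must be established is that the positive contribution from $u_2'(k) > 0$ (itself supplied by \cite[Lemma A.5]{LZ16}) strictly dominates the negative contribution from $\log u_2(k) < 0$. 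If this direct verification proves unwieldy, the backup plan is a contradiction argument through Lemma~\ref{lemma:convexminorantEE}: were $T_2(k_1) \geq T_2(k_2)$ for some $k_1 < k_2$, one could pick $T^* \in [T_2(k_2), T_2(k_1))$ to obtain EE at $k_2$ but BEE at $k_1$, and combining this with the already-proved monotonicity of $T_1$, the continuity of $k \mapsto (T_1(k), T_2(k))$, and the limit $T_2(k) \to 1$ as $k \to \infty$ implied by Theorem~\ref{thm:phase}(d), one would force an impossible configuration of the critical curve near the apex $(T_0, k_0)$.
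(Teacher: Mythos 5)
Your treatment of the easy half is correct and self-contained: since $u_1^*(\that(k),k)\in(1/2,1)$ is decreasing in $k$ by \cite[Lemma A.5]{LZ16} and $k\mapsto u^k$ is decreasing for any fixed $u\in(0,1)$, the two effects compound and $T_1$ is decreasing. The paper makes the same observation in the text preceding the lemma (``clearly decreasing''), so this part is fine.

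For the hard half your reduction is algebraically correct and in fact lands on exactly the same key technical fact the paper invokes, but you do not establish that fact, so a genuine gap remains. Concretely: your chain
$T_2'(k)=[\that'(k)-\partial_k J_k'(T_2)]/J_k''(T_2)$ together with $\that'(k)=\partial_k J_k'(\xi)$ for some $\xi\in(T_1,T_2)$ reduces the positivity of the numerator to the statement that $v\mapsto\partial_k J_k'(v)$ is decreasing. By Schwarz, $\partial_k J_k'=\partial_v\bigl(\partial_k J_k\bigr)$, so this is precisely the assertion that $v\mapsto\partial_k J_k(v)$ is concave --- which is the very first sentence of the paper's proof (``The function $\partial J_k/\partial k$ is a concave function for every $k$''). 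The paper then perturbs the double-tangent chord in $k$ and uses this concavity plus \cite[Lemma A.3]{LZ16} to conclude $T_1(k')<T_1(k)<T_2(k)<T_2(k')$; your implicit-differentiation route would reach the same conclusion once the concavity is in hand. But you explicitly defer that verification (``by direct computation ... if this direct verification proves unwieldy''), so the proof as written is not complete. You also only observe $J_k''(T_2)\ge 0$, whereas the formula needs strict positivity (which holds for $k>k_0$ because the tangent points lie in the strictly convex region of $J_k$, but this should be said).

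The proposed backup contradiction argument does not work as stated. From $T_2(k_1)\ge T_2(k_2)$ with $k_1<k_2$ you can produce a $T^*$ with BEE at $k_1$ and EE at $k_2$, but that is not by itself contradictory --- the claim that BEE at a smaller $k$ forces BEE at all larger $k$ is exactly the content of Theorem~\ref{thm:phase}(a), whose proof depends on this lemma, so invoking it would be circular. And continuity of $T_2$, monotonicity of $T_1$, and $T_2(k)\to 1$ do not rule out non-monotone $T_2$ on their own. If you want a complete argument, the cleanest route is to actually prove the concavity of $\partial_k J_k$ (a finite, if somewhat unpleasant, computation starting from $\partial_k J_k(v)=-\tfrac{\log v}{2k^2}\,v^{1/k}\log\tfrac{v^{1/k}}{1-v^{1/k}}$) and then feed it into either your implicit-differentiation formula or the paper's chord-perturbation argument.
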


\begin{proof}
The function $\frac{\partial J_k}{\partial k}$ is a concave function for every $k$. Because the line segment connecting $(T_1(k),J_k(T_1(k)))$ with $(T_2(k),J_k(T_2(k)))$ lies below the curve $(x,J_{k}(x))$, we have that, for all $\alpha\in[0,1]$ and $k' \downarrow k$,
\begin{equation}
\begin{split}
&J_{k'}\big(\alpha T_1(k)+(1-\alpha)T_2(k)\big)\\
&= J_k\big(\alpha T_1(k)+(1-\alpha)T_2(k)\big)+(k'-k)\tfrac{\partial J_k}{\partial k}\big(\alpha T_1(k)+(1-\alpha)T_2(k)\big)+o(k'-k)\\
&\geq \alpha J_k(T_1(k))+(1-\alpha)J_k(T_2(k))+(k'-k)(\alpha\tfrac{\partial J_k}{\partial k}(T_1(k))+(1-\alpha)\tfrac{\partial J_k}{\partial k}(T_2(k)))+o(k'-k)\\
&= \alpha J_{k'}(T_1(k))+(1-\alpha)J_{k'}(T_2(k))+o(k'-k).
\end{split}
\end{equation}
Hence, for $k'>k$ small enough, the line segment connecting the points $(T_1(k),J_{k'}(T_1(k)))$ and $(T_2(k),J_{k'}(T_2(k)))$ lies below the curve $(x,J_{k'}(x))$, and is not tangent to the curve at any of the end points. Thus, by \cite[Lemma A.3]{LZ16}, $T_1(k')<T_1(k)<T_2(k)<T_2(k')$.
\end{proof}


\paragraph{Minimum of $k_c$.}

By \cite[Proposition 3.2]{RY13}, for all $k \leq k_0$, $l_{\theta,k}$ has a unique maximiser for all $\theta\geq0$. For all $k>k_0$, there exist a $\theta \geq 0$ such that $l_{\theta,k}$ has two maximisers. Hence, the minimum value of $k_c(T^*)$ is $k_0$. In the proof of \cite[Proposition 3.2]{RY13} it is shown that $\that(k_0)=\frac{k_0^{k_0-1}}{2(k_0-1)^{k_0}}$, and so
\begin{equation}
\begin{split}
l'_{\that(k_0),k_0}(\tfrac{k_0-1}{k_0}) = \tfrac{(k_0)^{k_0-1}}{2(k_0-1)^{k_0}}\,k_0\left(\tfrac{k_0-1}{k_0}\right)^{k_0-1}-\tfrac{1}{2}\log(k_0-1)
= \tfrac{k_0}{2(k_0-1)}-\tfrac{1}{2}\log(k_0-1) = 0.
\end{split}
\end{equation}
Hence, $u^*(\that(k_0),k_0)=\frac{k_0-1}{k_0}$, and so for $T^*=(\frac{k_0-1}{k_0})^{k_0}$ we have $k_c(T^*)=k_0$. We conclude that $k_c$ has a unique minimum at the point $((\frac{k_0-1}{k_0})^{k_0},k_0)$.


\paragraph{Analyticity of $k_c$.}

Analyticity of $k_c$ follows from a straightforward application of the implicit function theorem. Let $f\colon\,(0,\infty)\times(0,1)^2\to\R^2$ be given by
\begin{equation}
 f(k,x,y) = \big(J_k'(x)-J'_k(y),J'_k(x)x-J'_k(y)y+J(y)-J(x)\big).
\end{equation}
Recall from the proof of Lemma \ref{lemma:convexminorantEE} that, for each $k$, $T_1(k)$ and $T_2(k)$ are defined such that $f(k,T_1(k),T_2(k)) = 0$. Note that $f$ is analytic, and its Jacobian
\begin{equation}
\begin{split}
\begin{pmatrix}
\frac{\partial f_1}{\partial x} & \frac{\partial f_1}{\partial y}\\
\frac{\partial f_2}{\partial x} & \frac{\partial f_2}{\partial y}
\end{pmatrix}
(T_1(k),T_2(k)) =
\begin{pmatrix}
J''_k(T_1(k)) & -J''_k(T_2(k))\\
T_1(k)J_k''(T_1(k)) & -T_2(k))J_k''(T_2(k))
\end{pmatrix},
\end{split}
\end{equation}
is invertible if $T_1(k)\neq T_2(k)$. Hence, for all $k>k_0$, $T_1$ and $T_2$ are analytic functions of $k$, so $k_c$ is an analytic function of $T^*$ outside its minimum.

Next, consider the behaviour of $k_c$ near $T_0$, so as $T_2-T_1\downarrow0$. By implicit differentiation, as $k\downarrow k_0$, the derivative of $T_1(k)$ is given by
\begin{equation}
\begin{split}
T_1'(k) 
&= \frac{1}{(T_1-T_2)J_k''(T_1)J_k''(T_2)}\left[(T_2-T_1)J_k''(T_2)\frac{\partial J'_k}{\partial k}
(T_1)+J_k''(T_2)\left(\frac{\partial J_k}{\partial k}(T_1)-\frac{\partial J_k}{\partial k}(T_2)\right)\right]\\
&= \frac{1}{J_k''(T_1)}\left(\frac{\partial J_k'}{\partial k}(T_1)+\frac{\frac{\partial J_k}{\partial k}(T_1)
-\frac{\partial J_k}{\partial k}(T_2)}{T_2-T_1}\right)\\
&= \frac{1}{J_k''(T_1)}O(T_2-T_1).
\end{split}
\end{equation}
It is not difficult to show that, for $k=k_0$, the function $J_{k_0}''$ has a zero that is also a minimum at $T=T_0$. Hence, as $k\downarrow k_0$, $J_k''(T_1(k))=O((T_2-T_1)^2)$, which implies that the derivative of $T_1'(k)$ diverges as $k\downarrow k_0$. In a similar fashion, we can show that the derivative of $T_2'(k)$ diverges as $k\downarrow k_0$. Hence, at $T_0$, $k_c$ is at least differentiable and has derivative zero.


\paragraph{Scaling of $k_c$ near the boundary.}

In order to identify the asymptotics of $k_c$ for $T^*$ near the edges of the interval $(0,1)$, we first compute the limit of $\that$ as $k\rightarrow\infty$. In the following, we suppress the dependence of $\that$ on $k$. By Taylor expansion,
\begin{equation}
\begin{split}
l_\theta(u_1^*) &\leq l_\theta\left(\tfrac{1}{2}\right) + \left(u_1^*-\tfrac{1}{2}\right)l_\theta'\left(\tfrac{1}{2}\right)\\
&\leq \theta\left(\tfrac{1}{2}\right)^k + \tfrac{1}{2}\log2 + \theta k\left(\tfrac{1}{2}\right)^k
= \theta\left(\tfrac{1}{2}\right)^k(1+k) + \tfrac{1}{2}\log 2,
\end{split}
\end{equation}
and $l_\theta(1) = \theta < l_\theta(u_2^*)$. This implies that
\begin{equation}
\hat\theta < \frac{\log2}{2[1-(\tfrac{1}{2})^k(1+k)]}.
\end{equation}
Also, $u_2^*(\theta,k)\in(\tfrac{k-1}{k},1)$ by \cite[Proposition 3.2]{RY13}. Hence
\begin{equation}
\begin{split}
l_{\theta,k}(u_2^*(\theta,k))
&\leq \theta - \tfrac{k-1}{2k}\log(\tfrac{k-1}{k}) 
- \tfrac{1}{2}(1-\tfrac{k-1}{k})\log(1-\tfrac{k-1}{k})\\
&= \theta - \tfrac{1}{2}\log(1-\tfrac{1}{k}) - \tfrac{1}{2k}\log(\tfrac{1}{k-1}),
\end{split}
\end{equation}
and $l_{\theta,k}(\tfrac{1}{2}) = \theta(\tfrac{1}{2})^k + \tfrac{1}{2}\log2 < l_{\theta,k}(u_1^*(\theta,k))$. This implies that
\begin{equation}
\begin{split}
\hat\theta > \frac{\log2 + \log(1-\frac{1}{k}) + \frac{1}{k}\log(\frac{1}{k-1})}{2[1-(\tfrac{1}{2})^k]}.
\end{split}
\end{equation}
Combining the bounds above, we obtain that $\that\to\frac{1}{2}\log2$ as $k\to\infty$.


\paragraph{$\bullet$ Scaling for $T^* \downarrow 0$.}

Let $y\in(\frac{1}{2},1)$. Then 
\begin{equation}
\label{eq:y}
\begin{split}
l_{\that,k}'(\tfrac{1}{2}+y^k)
&= \that k(\tfrac{1}{2}+y^k)^{k-1} - \tfrac{1}{2}\log\left(\tfrac{1+2y^k}{1-2y^k}\right)\\
&= \that k(\tfrac{1}{2}+y^k)^{k-1} - \tfrac{1}{2}\log\left(1+\tfrac{4y^k}{1-2y^k}\right)\\
&\leq \frac{\log2}{2[1-(\tfrac{1}{2})^k(1+k)]} k \left(\tfrac{1}{2}\right)^{k-1} -2y^k 
+ o(k\left(\tfrac{1}{2}\right)^k) + o(y^k) < 0
\end{split}
\end{equation}
as $k\rightarrow\infty$. Thus, $u_1^*(\that,k)<\frac{1}{2}+y^k$ for all $y\in(\frac{1}{2},1)$ and $k$ large enough. Hence $(\frac{1}{2}+y^{k_c})^{k_c}\geq T^*$ for $T^*$ small enough. We also have $T^*\geq(\frac{1}{2})^k$ for all $k$. Since this holds for all $y\in(\frac{1}{2},1)$ and $(\frac{1}{2}+y^{k})^{k}\sim(\frac{1}{2})^k$, we have $T^*\sim(\frac{1}{2})^{k_c}$.


\paragraph{$\bullet$ Scaling for $T^* \uparrow 1$.}
Let $x\in(0,1)$. Then 
\begin{equation}
\begin{split}
l'_{\that,k}(1-x^k)
&= k(\that(1-x^k)^{k-1}+\tfrac{1}{2}\log x)-\tfrac{1}{2}\log(1-x^k).\\        
\end{split}
\end{equation}
As $k\rightarrow\infty$, $(1-x^k)^{k-1}\rightarrow1$ and $\log(1-x^k)\rightarrow0$. Hence, if $-\frac{1}{2}\log x\geq\that$, then $l'_{\that,k}(1-x^k)<0$ for $k$ large enough, which implies that $u_2^*(\that,k)<1-x^k$. If $-\frac{1}{2}\log x<\that$, then $l'_{\that,k}(1-x^k)>0$, which implies that $u_2^*(\that,k)>1-x^k$. Recall that $\that\rightarrow\frac{1}{2}\log2$. Thus, choosing $x=\frac{1}{2}$, we get $(1-(\frac{1}{2})^{k_c})^{k_c}\sim T^*$, and so $k_c(\frac{1}{2})^{k_c} \sim1-T^*$.


\section{Proof of Theorem~\ref{thm:entropy}}
\label{s.proofentropy}
If $d=k$, then the statement of the theorem is vacuous, so we may assume that $d<k$. Let $T^*$ denote either $T_1^*(k)$ or $T_2^*(k)$. In this proof, we will often use that fact that $I(f)=I_{\frac{1}{2}}(f)-\frac{1}{2}\log2$. Any reference to the theory of replica symmetry is made with the implicit assumption that $p=\frac{1}{2}$.

Since there is ensemble equivalence for $T^*$, $(T^*,I((T^*)^{1/k}))$ lies on the convex minorant of $x\mapsto I(x^{1/k})$, and so $T^*\not\in(q_1(k)^k,q_2(k)^k)$, where $q_1(k),q_2(k)$ are defined as in the proof of Lemma \ref{lemma:convexminorantEE}. By \cite[Lemma A.5]{LZ16}, $q_1(k)<q_1(d)<q_2(d)<q_2(k)$, because $d<k$, with $d$ the largest degree of $H$. Hence, for all $T\in(T_1^*(k),q_1(d)]$ and $T\in[q_2(d),T_2^*(k))$, $(T,I(T^{1/d}))$ lies on the convex minorant of $x\mapsto I(x^{1/d})$, but $T$ is not in the region of ensemble equivalence. Thus, by \cite[Lemma 3.3]{LZ16}, $T$ is in the region of replica symmetry for $t(H,\cdot)$. This implies that $h\equiv T^{1/k}$ is the unique minimiser of
\begin{equation}
\begin{split}
\inf \{I(\h)\colon\,\h\in\Wt,\,t(H,\h)\geq T\} = \inf\{I(\h)\colon\,\h\in\Wt,\,t(H,\h)=T\}=\inf_{\h\in\Wt^*}I(\h).
\end{split}
\end{equation}
Furthermore, since $T$ is in the BEE-phase, we have $\theta^*=\that$. We conclude that
\begin{equation}
\begin{split}
s_\infty 
&= \sup_{\tilde{h}\in \Wt} \left[\theta^*T(\tilde{h})-I(\tilde{h})\right]
-\sup_{\tilde{h}\in \Wt^*} \left[\theta^*T(\tilde{h}) - I(\tilde{h})\right]\\
&= [\that T^*-I(T^{*\,1/k})] - [\that T-I(T^{1/k})]\\
&= \that (T^*-T) + [I(T^{1/k})-I(T^{*\,1/k})]\\
&= [J_k'(T^*)-\that](T-T^*)+J_k''(T^*)(T-T^*)^2+O((T-T^*)^3)\\
&= \frac{T^{*\,1/k-2}}{2k}\left\{\frac{1}{k}\left(1+\frac{T^{*\,1/k}}{1-T^{*\,1/k}}\right)+\left(\frac{1}{k}-1\right)\log\left(\frac{T^{*\,1/k}}{1-T^{*\,1/k}}\right)\right\}(T-T^*)^2+O((T-T^*)^3)
\end{split}
\end{equation}
as $T\to T^*$. The last equality follows from the fact that $J'_k(T^*)=\that$ (see the proof of Lemma \ref{lemma:convexminorantEE}).


\section{Proof of Theorem~\ref{thm:gap}}
\label{s.proofgap}

We first show that a graph sampled from the canonical ensemble converges to a probability distribution on a finite set of constant graphons. In \cite[Theorem 3.2 and Theorem 4.2]{CD13} this is shown for the exponential random graph model with a fixed parameter $\theta^*$. We adapt the proof to the case where we have a sequence of parameters $(\theta_n^*)_{n\in\N}$ converging to some $\theta^*$.

\begin{lemma}
\label{lemma:convergencecanonicalminimisers}
Let $G_n$ be a random graph drawn from the canonical ensemble $\Pcan$ with parameter $\theta^*_n$. Let $U(\theta)$ be the set of maximisers of \eqref{eq:variationalformulachatterjee} for some parameter $\theta$. Then (recall \eqref{graphondef})
\begin{equation}
\min_{u\in U(\theta^*_{\infty})}\des(\widetilde{h}^{G_n},\widetilde{u})\rightarrow0,\qquad n\rightarrow\infty
\end{equation}
in probability.
\end{lemma}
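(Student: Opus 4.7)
\emph{Proof sketch.} The plan is to follow the strategy of \cite[Theorems 3.2 and 4.2]{CD13}, where an analogous concentration statement is proven for the exponential random graph model with a \emph{fixed} parameter $\theta^*$. The only new ingredient needed is to absorb the convergence $\theta^*_n\to\theta^*_\infty$ as an $o(n^2)$ perturbation at the large-deviation scale $n^2$.

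First, I would rewrite the law of $\h^{G_n}$ on $(\Wt,\des)$ as a tilt of the Erd\H{o}s--R\'enyi graphon measure $\mu_n$ at $p=\tfrac12$:
\begin{equation*}
\nu_n(d\h) \;=\; \frac{\eee^{n^2 \theta^*_n T(\h)}}{Z_n}\,\mu_n(d\h), \qquad Z_n \;:=\; \int_\Wt \eee^{n^2 \theta^*_n T(\h)}\,\mu_n(d\h).
\end{equation*}
The Chatterjee--Varadhan LDP \cite{CV11} states that $\mu_n$ satisfies a large deviation principle on the compact space $(\Wt,\des)$ at speed $n^2$ with rate function $I+\tfrac12\log 2$. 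Since $T$ is continuous and bounded on $(\Wt,\des)$ and $\theta^*_\infty\geq 0$, Varadhan's lemma applied to the analogous tilted measure $\nu_n^{(\infty)}$ with fixed parameter $\theta^*_\infty$ shows that $\nu_n^{(\infty)}$ concentrates on the maximiser set $M$ of $\theta^*_\infty T(\h)-I(\h)$ on $\Wt$. By Theorem~\ref{thm:subgraph}, $M=\{\widetilde{u}\colon u\in U(\theta^*_\infty)\}$.

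Next, I would transfer this conclusion to $\nu_n$ by controlling the Radon--Nikodym derivative. Setting $\varepsilon_n := \theta^*_n-\theta^*_\infty\to 0$ and using $T(\h)\in[0,1]$ together with the Lipschitz bound $|\psi_n(\theta^*_n)-\psi_n(\theta^*_\infty)|\leq|\varepsilon_n|$ (immediate from $T\leq 1$), the log Radon--Nikodym derivative is uniformly small in $\h\in\Wt$:
\begin{equation*}
\left|\log\frac{d\nu_n}{d\nu_n^{(\infty)}}(\h)\right| \;\leq\; 2n^2|\varepsilon_n| \;=\; o(n^2).
\end{equation*}
Fix $\eta>0$ and set $A_\eta := \{\h\in\Wt\colon \min_{u\in U(\theta^*_\infty)}\des(\h,\widetilde{u})\geq\eta\}$. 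Compactness of $\Wt$ and upper semicontinuity of $\theta^*_\infty T - I$ yield an energy gap $c=c(\eta)>0$ with $\sup_{A_\eta}[\theta^*_\infty T-I]\leq \sup_\Wt[\theta^*_\infty T-I]-c$, so the LDP upper bound gives $\nu_n^{(\infty)}(A_\eta)\leq \eee^{-cn^2+o(n^2)}$. The uniform control above then transfers this to $\nu_n(A_\eta)\leq \eee^{-cn^2+o(n^2)}\to 0$, which is the claimed convergence in probability.

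The main obstacle is to rigorously establish the energy gap. This requires that the maximisers of $\theta^*_\infty T - I$ on $\Wt$ are exactly the constant graphons indexed by $U(\theta^*_\infty)$, which is ensured by Theorem~\ref{thm:subgraph} via \cite[Theorem 4.1]{CD13}, and that the supremum over any set bounded away from $M$ is strictly smaller, which follows from compactness of $\Wt$ together with upper semicontinuity of $\theta^*_\infty T - I$. The uniformity of the $o(n^2)$ perturbation is cheap because $T$ is bounded; the whole argument thus reduces to the Chatterjee--Diaconis concentration plus a harmless drift in the parameter.
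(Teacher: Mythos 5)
Your proposal is correct, and it takes a genuinely different route from the paper's. The paper adapts the Chatterjee--Diaconis covering argument directly to the drifting parameter sequence $(\theta_n^*)_{n\in\N}$, which forces it to establish convergence of the maximiser sets $U(\theta_n^*)\to U(\theta_\infty^*)$ and to split into the two cases $\theta_\infty^*\neq\that$ and $\theta_\infty^*=\that$ (in the latter, $U(\theta_\infty^*)$ has two elements while $U(\theta_n^*)$ may have one, so the paper switches to the set $U'(\theta)$ of local maxima). Your approach sidesteps both of these complications: you first establish the exponential decay $\nu_n^{(\infty)}(A_\eta)\leq\eee^{-cn^2+o(n^2)}$ for the \emph{fixed}-parameter model, which is essentially [CD13, Theorem 3.2] combined with the observation that $A_\eta$ is closed, $\theta_\infty^*T-I$ is upper semicontinuous on the compact $\Wt$, and its maximiser set is $\{\widetilde u\colon u\in U(\theta_\infty^*)\}$ by [CD13, Theorem 4.1]; then you transfer to $\nu_n$ via the uniform Radon--Nikodym bound $|\log(d\nu_n/d\nu_n^{(\infty)})|\leq 2n^2|\theta_n^*-\theta_\infty^*|=o(n^2)$, which is cheap to verify from $T\in[0,1]$ and the $1$-Lipschitzness of $\psi_n$. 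The perturbation is then harmless at the LDP scale $n^2$ because it gets dominated by the fixed energy gap $c(\eta)>0$. What your route buys is a clean separation of the two sources of difficulty (LDP concentration vs.\ parameter drift) and no case analysis on whether $\theta_\infty^*$ equals $\that$; what the paper's route buys is self-containedness, since it reworks [CD13] line-by-line rather than invoking the tilted-LDP/Varadhan machinery as a black box. Two small presentational points: to make the energy-gap step fully rigorous you should say explicitly that $A_\eta$ is closed and disjoint from the maximiser set, and you should be a bit more careful than ``Varadhan's lemma shows concentration'' --- what you actually use (and correctly spell out at the end) is the upper bound of the tilted LDP over the closed set $A_\eta$, for which boundedness of $T$ ensures the required integrability condition.
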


\begin{proof}
Let $\eta>0$ and define
\begin{equation}
\widetilde{A}(\theta,\eta):=\{\h\in\Wt\mid\des(\h,\widetilde{U}(\theta))\geq\eta\}.
\end{equation}
Recall from the proof of Theorem \ref{thm:subgraph} that $U(\theta)$ consists of a single point for $\theta\neq\that$ and two points for $\theta=\that$. Also recall the definition of the function $l_{\theta}$ from the proof of Theorem \ref{thm:subgraph}. We first prove the case that $\theta^*_{\infty}\neq\that$. Then $l_{\theta^*_{n}}$ converges to $l_{\theta^*_{\infty}}$ uniformly as $n\rightarrow\infty$, so $U(\theta_n^*)$ converges to $U(\theta_{\infty}^*)$. Here we assume without loss of generality that $\theta_n^*\neq\that$ and let $U(\theta)$ denote the single maximiser of $l_{\theta}$ by slight abuse of notation. Hence,
\begin{equation}\label{eq:convergenceminimisers}
\widetilde{A}(\theta_n^*,\eta)\subset\widetilde{A}(\theta_{\infty}^*,\tfrac{\eta}{2})
\end{equation}
for all $n$ large enough by the triangle inequality. We now adapt the arguments from the proof of \cite[Theorem 3.2]{CD13}.

By compactness of $\Wt$ and $\widetilde{U}(\theta)$, and upper semi-continuity of $\theta_\infty^*T-I$, it follows that
\begin{equation}
2\varepsilon:=\sup_{\h\in\Wt}\left[\theta_\infty^*T(\h)-I(\h)\right]
-\sup_{\h\in\widetilde{A}(\theta_\infty^*,\frac{\eta}{2})}\left[\theta_\infty^*T(\h)-I(\h)\right]>0.
\end{equation}
Since the $\theta_n^*T$ are all bounded functions and the sequence $(\theta_n^*)_{n\in\N}$ is bounded, there exists a finite set $R$ such that the intervals $\{(a,a+\varepsilon)\mid a\in R\}$ cover the range of $\theta^*_n T$ and $\theta_\infty^*T$ for all $n$ large enough. For each $a\in R$, let $\widetilde{F}^a(\theta_n^*):=(\theta_n^*T)^{-1}([a,a+\varepsilon])$. Now define
$\widetilde{A}^a(\theta_n^*,\eta):=\widetilde{A}(\theta_n^*,\eta)\cap\widetilde{F}^a(\theta_n^*)$ and $\widetilde{A}^a_n(\theta_n^*,\eta)=\widetilde{A}^a(\theta_n^*,\eta)\cap\widetilde{\cG}_n$. Choose $\delta=\frac{1}{2}\varepsilon$. Since $\theta_n^*\rightarrow\theta_\infty^*$, we have that
\begin{equation}\label{eq:convergencepreimage}
(\theta_n^*T)^{-1}([a,a+\varepsilon])\subset(\theta_\infty^*T)^{-1}([a-\delta,a+\varepsilon+\delta])=:\widetilde{G}^a
\end{equation}
for all $n$ large enough. Now define $\widetilde{B}^a:=\widetilde{A}(\theta_{\infty}^*,\frac{\eta}{2})\cap \widetilde{G}^a$ and $\widetilde{B}^a_n:=\widetilde{B}^a\cap\widetilde{\mathcal{G}}_n$. 

Using equations \eqref{eq:convergenceminimisers} and \eqref{eq:convergencepreimage}, we obtain $\widetilde{A}_n^a(\theta_n^*,\eta)\subset\widetilde{B}^a_n$. Hence,
\begin{equation}
\begin{split}
\Pcan(G_n\in\widetilde{A}(\theta_{n}^*,\eta)) 
&\leq \eee^{-n^2\psi_n(\theta^*_n)}\sum_{a\in R}e^{n^2(a+\varepsilon)}|\widetilde{A}_n^a(\theta_n^*,\eta)|\\
&\leq \eee^{-n^2\psi_n(\theta^*_n)}\sum_{a\in R}e^{n^2(a+\varepsilon)}|\widetilde{B}_n^a|\\
&\leq \eee^{-n^2\psi_n(\theta_n^*)}|R|\sup_{a\in R}\eee^{n^2(a+\varepsilon)}|\widetilde{B}_n^a|.
 \end{split}
\end{equation}
Using the large deviation principle for the Erd\H{o}s-R\'enyi random graph in \cite[Equation (8.1)]{CD13}, we obtain
\begin{equation}
\begin{split}
\limsup_{n\rightarrow\infty}\frac{\log|\widetilde{B}^a_n|}{n^2}\leq-\inf_{\h\in\widetilde{B}^a}I(\h).
\end{split}
\end{equation}
Also, by \cite[Lemma A.1]{dHMRS18}, we have
\begin{equation}
\lim_{n\to\infty} \psi_n(\theta_n^*) = \psi_{\infty}(\theta_\infty^*) = \sup_{\h\in\Wt}\left[\theta_\infty^*T(h)-I(h)\right].
\end{equation}
Combining these two results, we conclude
\begin{equation}\label{eq:canonicalconvergence}
\limsup_{n\to\infty}\frac{\log\Pcan(G_n\in\widetilde{A}(\theta_n^*,\eta))}{n^2}
\leq \sup_{a\in R}\left[a+\varepsilon-\inf_{\h\in\widetilde{B}^a}I(\h)\right]
-\sup_{\h\in\Wt}\left[\theta_\infty^*T(h)-I(h)\right].
\end{equation}

The remainder of the proof now follows exactly as in \cite{CD13}. Indeed, each $\widetilde{h}\in\widetilde{B}^a$, we have $\theta^*_{\infty}T(\h)\geq a-\delta$. Hence,
\begin{equation}
\sup_{\h\in\widetilde{B}^a}[\theta_{\infty}^*T(\h)-I(\h)]\geq a-\delta-\inf_{\h\in\widetilde{B}^a}I(\h).
\end{equation}
Substituting this into \eqref{eq:canonicalconvergence}, we get
\begin{equation}
\begin{split}
\limsup_{n\to\infty}\frac{\log\Pcan(G_n\in\widetilde{A}(\theta_n^*,\eta))}{n^2}\leq&\varepsilon+\delta+\sup_{a\in R}\sup_{\h\in\widetilde{B}^a}\left[\theta_{\infty}^*T(\h)-I(\h)\right]-\sup_{\h\in\Wt}\left[\theta_\infty^*T(\h)-I(\h)\right]\\
\leq&\varepsilon+\delta+\sup_{\h\in\widetilde{A}(\theta_\infty^*,\frac{\eta}{2})}\left[\theta_\infty^*T(\h)-I(\h)\right]-\sup_{\h\in\Wt}\left[\theta_\infty^*T(\h)-I(\h)\right]\\
\leq&\varepsilon+\delta-2\varepsilon=-\frac{\varepsilon}{2}.
\end{split}
\end{equation}
We have thus shown that
\begin{equation}
\des(\widetilde{h}^{G_n},\widetilde{U}(\theta^*_n))\rightarrow0,n\rightarrow\infty
\end{equation}
in probability. Since $U(\theta_n^*)\rightarrow U(\theta_{\infty}^*)$, this concludes the proof in the case $\theta_{\infty}\neq\that$.

Now assume that $\theta_{\infty}=\that$. Then \eqref{eq:convergenceminimisers} may no longer hold, since $U(\theta_{\infty}^*)$ now consists of two points, whereas $U(\theta_{n}^*)$ may consist of only one point. However, if we define $U'(\theta)$ as the set consisting of the two local maxima of $l_\theta$, and define $\widetilde{A}'(\theta,\eta)$ analogously, then the analogue of \eqref{eq:convergenceminimisers} does hold for all $n$ large enough. Here we use that the two local maxima of $l_{\theta^*_n}$ converge to the two local maxima of $l_{\theta_{\infty}^*}$. The rest of the proof then goes through as before to show that
\begin{equation}
\des(\widetilde{h}^{G_n},\widetilde{U}'(\theta^*_n))\rightarrow0
\end{equation}
in probability. Again using convergence of the local maxima, we obtain
\begin{equation}
\des(\widetilde{h}^{G_n},\widetilde{U}'(\theta^*_{\infty}))\rightarrow0
\end{equation}
in probability. However, for $\theta_{\infty}^*=\that$, we have that $\widetilde{U}'(\theta_{\infty}^*)=\widetilde{U}(\theta_{\infty}^*)$, which concludes the proof.
\end{proof}

\begin{corollary}
\label{cor:convergencecanonical}
Assume that $T^*$ is in the BEE-phase. Let $G_n$ be a random graph drawn from the canonical ensemble $\Pcan$. Then $h^{G_n}$ converges weakly to 
\begin{equation}
\frac{u_2^k-T^*}{u_2^k-u_1^k}\delta_{u_1}+\frac{T^*-u_1^k}{u_2^k-u_1^k}\delta_{u_2},
\end{equation} 
with $u_1<u_2$ the two maximisers of \eqref{eq:variationalformulachatterjee} for $\theta=\that$.
\end{corollary}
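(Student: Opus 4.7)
The plan is to combine Lemma~\ref{lemma:convergencecanonicalminimisers} (which pins the empirical graphon to the finite set $\{u_1,u_2\}$ in cut metric) with the soft constraint $\EE_{\mathrm{can}}[T(G_n)]=T^*_n\to T^*$ to identify the mixture weights. Since $\theta^*_\infty=\that$ in the BEE-phase (Theorem~\ref{thm:subgraph}(b)), we have $U(\theta^*_\infty)=\{u_1,u_2\}$ with $u_1<u_2$, and the previous lemma gives
\begin{equation}
\min_{i\in\{1,2\}}\des\big(\widetilde{h}^{G_n},\widetilde{u}_i\big)\to 0
\qquad\text{in }\Pcan\text{-probability}.
\end{equation}

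First I would fix $\eta>0$ smaller than half the cut-distance between $\widetilde{u}_1$ and $\widetilde{u}_2$, and introduce the disjoint events $A_i^{(n)}:=\{\des(\widetilde{h}^{G_n},\widetilde{u}_i)<\eta\}$ for $i=1,2$. By the lemma, $p_0^{(n)}:=\Pcan((A_1^{(n)}\cup A_2^{(n)})^c)\to 0$, so if I set $p_i^{(n)}:=\Pcan(A_i^{(n)})$ then $p_1^{(n)}+p_2^{(n)}\to 1$. Along any subsequence I can extract a further subsequence such that $p_i^{(n)}\to p_i$ with $p_1+p_2=1$.

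Next I would exploit the soft constraint. By the definition of $\Pcan$ and the choice of $\theta_n^*$, $\EE_{\mathrm{can}}[T(h^{G_n})]=T^*_n\to T^*$. The functional $T(\h)=t(F,\h)$ is continuous on $(\Wt,\des)$, and in fact Lipschitz in the cut metric (with $0\leq T\leq 1$), so on $A_i^{(n)}$ one has $|T(h^{G_n})-u_i^k|\leq C\eta$ for a constant $C$ depending only on $F$. Splitting the expectation gives
\begin{equation}
T^*_n = \sum_{i=1,2}\EE_{\mathrm{can}}\!\left[T(h^{G_n});A_i^{(n)}\right] + \EE_{\mathrm{can}}\!\left[T(h^{G_n});(A_1^{(n)}\cup A_2^{(n)})^c\right],
\end{equation}
and the last term is bounded by $p_0^{(n)}\to 0$, while the first sum differs from $p_1^{(n)}u_1^k+p_2^{(n)}u_2^k$ by at most $C\eta$. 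Passing to the limit along the subsequence and then sending $\eta\downarrow 0$ yields $T^*=p_1u_1^k+p_2u_2^k$. Combined with $p_1+p_2=1$, this uniquely determines
\begin{equation}
p_1=\frac{u_2^k-T^*}{u_2^k-u_1^k},\qquad p_2=\frac{T^*-u_1^k}{u_2^k-u_1^k},
\end{equation}
and uniqueness of subsequential limits upgrades to full convergence $p_i^{(n)}\to p_i$. Since the empirical graphon concentrates on the two points $\widetilde u_1,\widetilde u_2$ with these weights, $h^{G_n}$ converges weakly (as a random element of $\Wt$) to $p_1\delta_{u_1}+p_2\delta_{u_2}$, which is the claim.

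The conceptual content is concentrated in the first step, which is Lemma~\ref{lemma:convergencecanonicalminimisers}; the main (mild) technical point remaining here is the uniform approximation $T(h^{G_n})\approx u_i^k$ on $A_i^{(n)}$ together with checking that the weights $p_i$ are honestly in $[0,1]$, which follows because $T^*\in(u_1^k,u_2^k)=(T_1^*(k),T_2^*(k))$ throughout the BEE-phase. Note that $u_1^k<T^*<u_2^k$ also guarantees that both $p_1$ and $p_2$ are strictly positive, so the canonical ensemble genuinely realises a nontrivial two-component mixture, as anticipated in Section~\ref{ss.typgraph}.
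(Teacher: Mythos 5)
Your proof is correct and follows essentially the same strategy as the paper: use Lemma~\ref{lemma:convergencecanonicalminimisers} to concentrate the empirical graphon on $\{\widetilde u_1,\widetilde u_2\}$, then exploit the soft constraint $\Ecan[T(h^{G_n})]=T^*_n\to T^*$ to pin down the mixture weights, and finish by uniqueness of subsequential limits. The only cosmetic difference is that the paper invokes Prokhorov's theorem on the laws over the compact space $\Wt$, whereas you extract subsequences of the scalar masses $p_i^{(n)}\in[0,1]$ directly (Bolzano--Weierstrass) and handle the error terms explicitly via the Lipschitz continuity of $T$ in the cut metric; both routes are equally valid and lead to the same identification.
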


\begin{proof}
From Lemma~\ref{lemma:convergencecanonicalminimisers} it is clear that the laws of $(G_n)_{n\in\N}$ form a tight sequence of probability measures. Hence, by Prokhorov's Theorem, for every subsequence $(n_k)_{k\in\N}$ there exists a further subsequence $(n_{k_l})_{l\in\N}$ such that $(G_{n_{k_l}})_{l\in\N}$ converges weakly to the random graphon $p\delta_{u_1}+(1-p)\delta_{u_2}$ for some $p\in[0,1]$. Since the homomorphism density is continuous and bounded, this implies that $(\Ecan[t(H,G_{n_{k_l}})])_{l\in\N}$ converges to $pu_1^k+(1-p)u_2^k$. However, by the definition of the canonical ensemble, this sequence also converges to $T^*$. Hence
\begin{equation}
\begin{split}
T^*=\lim_{l\rightarrow\infty} \Ecan[t(H,G_{n_{k_l}})]=p u_1^k+(1-p)u_2^k.
\end{split}
\end{equation}
Solving for $p$, we obtain that $(G_{n_{k_l}})_{l\in\N}$ converges weakly to 
\begin{equation}
\frac{u_2^k-T^*}{u_2^k-u_1^k}\delta_{u_1}+\frac{T^*-u_1^k}{u_2^k-u_1^k}\delta_{u_2}.
\end{equation} 
Since the subsequence $(n_k)_{k\in\N}$ is arbitrary and the expression above does not depend on the chosen subsequence, we conclude that weak convergence holds for the sequence $(G_n)_{n\in\N}$.
\end{proof}

We can also show convergence of the microcanonical ensemble.

\begin{lemma}
\label{lemma:convergencemicrocanonical}
Let $G_n$ be a random graph drawn from the microcanonical ensemble $\Pmic$. Then $\widetilde{h}^{G_n}$ converges in probability to $\widetilde{F}^*$, with $\widetilde{F}^*$ the set of minimisers in $\Wt^*$ of $I$.
\end{lemma}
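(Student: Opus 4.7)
The plan is to combine the large deviation principle (LDP) for the Erd\H{o}s--R\'enyi random graph with $p = \tfrac12$ from \cite{CV11} with the variational formula of Theorem~\ref{th:Limit} to upper bound, exponentially in $n^2$, the probability that $\widetilde{h}^{G_n}$ stays a positive $\delta_\square$-distance away from $\widetilde{F}^*$. Write $\cG_n^* := \{G \in \cG_n : T(G) = T^*_n\}$, so that $\Pmic$ is uniform on $\cG_n^*$. Fix $\eta > 0$ and set $A_\eta := \{\tilde h \in \Wt : \des(\tilde h,\widetilde{F}^*) \geq \eta\}$, which is closed in the cut metric since $\tilde h \mapsto \des(\tilde h,\widetilde{F}^*)$ is $1$-Lipschitz. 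Convergence in probability follows from $\Pmic(\widetilde{h}^{G_n} \in A_\eta) \to 0$.

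First I would pin down the denominator $|\cG_n^*|$. Using \eqref{eq:KL2}, for any $G^* \in \cG_n^*$ one has $\Pmic(G^*)/\Pcan(G^*) = \big[|\cG_n^*|\,\eee^{n^2[\theta_n^*T_n^* - \psi_n(\theta_n^*)]}\big]^{-1}$, so Theorem~\ref{th:Limit} and Lemma~\ref{lemma:tuningparameter} yield
\begin{equation}
\tfrac{1}{n^2}\log|\cG_n^*| \;\longrightarrow\; -s_\infty - \theta_\infty^* T_\infty^* + \psi_\infty(\theta_\infty^*) \;=\; -\inf_{\tilde h \in \Wt^*} I(\tilde h),
\end{equation}
where the second equality uses $s_\infty = \psi_\infty(\theta_\infty^*) - \sup_{\tilde h\in\Wt^*}[\theta_\infty^* T(\tilde h) - I(\tilde h)] = \psi_\infty(\theta_\infty^*) - \theta_\infty^* T_\infty^* + \inf_{\Wt^*}I$. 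For the numerator, for each $\epsilon > 0$ and all $n$ large enough (so that $|T_n^* - T_\infty^*| \leq \epsilon$),
\begin{equation}
\{G \in \cG_n^* : \widetilde{h}^G \in A_\eta\} \subset \{G \in \cG_n : \widetilde{h}^G \in C_\epsilon\}, \qquad C_\epsilon := A_\eta \cap T^{-1}([T_\infty^* - \epsilon, T_\infty^* + \epsilon]),
\end{equation}
and $C_\epsilon$ is closed by continuity of $T$. The LDP upper bound for $\mathrm{ER}(n,\tfrac12)$, combined with $I = I_{1/2} - \tfrac12\log 2$, gives $\limsup_{n\to\infty} n^{-2}\log|\{G : \widetilde{h}^G \in C_\epsilon\}| \leq -\inf_{C_\epsilon} I$, hence
\begin{equation}
\limsup_{n\to\infty} \tfrac{1}{n^2} \log \Pmic(\widetilde{h}^{G_n} \in A_\eta) \;\leq\; \inf_{\Wt^*} I \;-\; \inf_{C_\epsilon} I.
\end{equation}

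The main remaining step, and the principal obstacle, is to verify strict inequality $\inf_{C_\epsilon} I > \inf_{\Wt^*} I$ for some $\epsilon > 0$; this is what makes the bound decay exponentially. Suppose for contradiction no such $\epsilon$ exists, and pick $\tilde h_m \in C_{1/m}$ with $I(\tilde h_m) \downarrow \inf_{\Wt^*} I$. By compactness of $(\Wt,\delta_\square)$, along a subsequence $\tilde h_m \to \tilde h_\infty$. Continuity of $T$ forces $T(\tilde h_\infty) = T_\infty^*$, so $\tilde h_\infty \in \Wt^*$, and lower semi-continuity of $I$ gives $I(\tilde h_\infty) \leq \inf_{\Wt^*} I$, whence $\tilde h_\infty \in \widetilde{F}^*$. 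But $A_\eta$ is closed and $\tilde h_m \in A_\eta$ for every $m$, so $\tilde h_\infty \in A_\eta$, contradicting $\widetilde{F}^* \cap A_\eta = \emptyset$. The underlying difficulty that this argument addresses is that the hard constraint $T(G) = T_n^*$ is a measure-zero event in the ambient graphon space, so the LDP cannot be applied directly to $\cG_n^*$; the $\epsilon$-relaxation of the constraint combined with the compactness and lower-semi-continuity of $I$ is the mechanism that circumvents this subtlety.
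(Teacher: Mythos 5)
Your proof is correct, and the overall strategy coincides with the paper's: bound $\Pmic(\widetilde h^{G_n} \in A_\eta)$ as a ratio, extract the exponential rate via the Erd\H{o}s--R\'enyi LDP with $p=\tfrac12$, and close with compactness of $(\Wt,\des)$ plus lower semi-continuity of $I$ to get a strictly negative exponent. The paper's own proof is a two-line variant of this: it also writes $\Pmic$ of the bad event as $|\widetilde F_n^\varepsilon|\cdot\Pmic(G_n=G_n^*)$ and converts each factor to an exponential asymptotic, but it delegates both conversions to the counting results of \cite{dHMRS18} ((3.22) and Corollary~2.9), then observes negativity from compactness without spelling out the closedness and semi-continuity bookkeeping.

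What you do differently, and what it buys: you do not cite the counting lemmas of \cite{dHMRS18} and instead re-derive them. For the denominator $|\cG_n^*|$ you route through \eqref{eq:KL2}, Theorem~\ref{th:Limit} and Lemma~\ref{lemma:tuningparameter}, which is a clean and self-contained way to get $n^{-2}\log|\cG_n^*|\to-\inf_{\Wt^*}I$. For the numerator you do not work inside $\Wt^*$; instead you enlarge the bad set to $C_\epsilon = A_\eta\cap T^{-1}([T_\infty^*-\epsilon,T_\infty^*+\epsilon])$, which is a closed set in $\Wt$, and apply the LDP upper bound there. This explicitly resolves the subtlety that the paper's set $\widetilde F_n^\varepsilon$ lives inside the ``fiber'' $\Wt^*$ (determined by the limiting constraint $T_\infty^*$) even though graphs in $\cG_n^*$ satisfy $T(h^G)=T_n^*$ which need not equal $T_\infty^*$ for finite $n$; the paper papers over this by invoking the $\delta_\square$-ball counting results of \cite{dHMRS18}, which internalize the same relaxation. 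Your final compactness argument (pick $\tilde h_m\in C_{1/m}$ with $I$-values approaching $\inf_{\Wt^*}I$, extract a limit in $A_\eta\cap\Wt^*\cap\widetilde F^*$, contradiction) is exactly what is needed and is stated more completely than in the paper, where ``$\Wt^*$ compact and $\widetilde F^\varepsilon$ contains no minimisers'' is asserted without noting that $\widetilde F^\varepsilon$ as written (strict inequality $>\varepsilon$) is not closed, so one should pass to its closure before invoking attainment. In short: same skeleton, but yours is a more self-contained and more careful version that fills in the relaxation-of-the-hard-constraint step which the paper outsources.

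One small stylistic note: when you ``pick $\tilde h_m\in C_{1/m}$ with $I(\tilde h_m)\downarrow\inf_{\Wt^*}I$'', what you can actually guarantee from the hypothesis $\inf_{C_{1/m}}I\le\inf_{\Wt^*}I$ is $I(\tilde h_m)\le\inf_{\Wt^*}I+1/m$; the argument goes through unchanged with this, but the phrasing ``$\downarrow$'' slightly overstates what is extracted. This does not affect correctness.
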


\begin{proof}
The proof is similar to the proof of \cite[Theorem 3.1]{CV11}. Fix $\varepsilon>0$ and let 
\begin{equation}
\widetilde{F}^\varepsilon:=\{\widetilde{h}\in\Wt^*\mid\des(\h,\widetilde{F}^*)>\varepsilon\}
\end{equation}
and
\begin{equation}
\widetilde{F}^\varepsilon_n:=\{\widetilde{h}\in\widetilde{F}_\varepsilon\mid\des(\h,\widetilde{F}^*)>\varepsilon,\,
\h=\widetilde{G}\text{ for some }G\in \mathcal{G}_n\}.
\end{equation}
Then, by \cite[(3.22) and Corollary 2.9]{dHMRS18},
\begin{equation}
\begin{split}
\lim_{n\to\infty} \frac{1}{n^2} \log\Pmic(\widetilde{F}^\varepsilon)
=& \lim_{n\to\infty} \frac{1}{n^2}\log(|\widetilde{F}^\varepsilon_n|\Pmic(G_n=G_n^*))\\
=&\inf_{\h\in\Wt^*}I(\h)+\limn\frac{1}{n^2}\log|\widetilde{F}_n^\varepsilon|\\
=&\inf_{\h\in\Wt^*}I(\h)-\inf_{\h\in\widetilde{F}^\varepsilon}I(\h),
\end{split}
\end{equation}
where $G_n^*$ is any graph in $\mathcal{G}_n$ such that $\widetilde{G}^*_n\in\Wt^*$. Since $\Wt^*$ is a compact set and $\widetilde{F}^\varepsilon$ does not contain any minimisers of $\inf_{\h\in\Wt^*}I(\h)$, we conclude that the expression above is negative, which implies that
\begin{equation}
\lim_{n \rightarrow \infty} \Pmic(\widetilde{F}^\varepsilon) = 0.
\end{equation}
\end{proof}

We next turn our attention to the largest eigenvalue. For a graph $G_n$ on $n$ vertices, $n^{-1}\lambda_{n}(G_n)$ equals the operator norm $\|h^{G_n}\|_{\op}$ of the empirical graphon of $G_n$. The operator norm is continuous and bounded, so we have 
\begin{equation}
\lim_{n\to\infty} n^{-1} \Ecan[\lambda_n] = pu_1+(1-p)u_2
= \frac{T^*(u_2-u_1)+u_1u_2(u_2^{k-1}-u_1^{k-1})}{u_2^k-u_1^k}=:f(T^*).
\end{equation}
If $T^*$ is in the region of replica symmetry for the subgraph $H$, then $h\equiv(T^*)^{1/k}$ is the unique minimiser of $I$ in $\Wt^*$. So, in this case,
\begin{equation}
\lim_{n\to\infty} n^{-1} \Emic[\lambda_n] = (T^*)^{1/k}>f(T^*),
\end{equation}
since the function $x \mapsto x^{1/k}$ is concave, $f$ is affine in $T^*$, and we have $f(u_1^k)=u_1=(u_1^k)^{1/k}$ and $f(u_2^k)=u_2=(u_2^k)^{1/k}$. 

The second part of the theorem follows from a simple Taylor expansion.



\end{document}